


\documentclass{publmathdeb}

\usepackage{amsmath,amscd,amsthm}
\usepackage{epsfig}
\usepackage{amssymb}
\usepackage{amsfonts,flafter,epsf}
\usepackage{newlfont}
\usepackage{xcolor} %
\usepackage{mathtools}
\usepackage{rotating}
\usepackage{comment}

\graphicspath{{./Figures/}}

\theoremstyle{plain} %
\newtheorem{theorem}{Theorem}
\newtheorem{lemma}[theorem]{Lemma}
\newtheorem*{lemma*}{Lemma} %

\newtheorem{proposition}[theorem]{Proposition}

  \theoremstyle{definition} %
  \newtheorem{definition}[theorem]{Definition}

  \theoremstyle{remark}
  \newtheorem{example}[theorem]{Example}

  \newcommand{\RSet}{{\mathbb{R}}}
  \newcommand{\ZSet}{{\mathbb{Z}}}
  
  \newcommand{\QSet}{{\mathbb{Q}}}
  \newcommand{\CSet}{{\mathbb{C}}}

\newcommand\set[1]{\{\,#1\,\}}

\newcommand{\Sing}{\mathop{\mathrm{Sing}}\nolimits}

\newcommand{\rk}{\mathop{\mathrm{rk}}\nolimits}

\newcommand{\im}{\mathop{\mathrm{im}}\nolimits}

\newcommand{\Hom}{\mathop{\mathrm{Hom}}\nolimits}
\newcommand{\har}{\mathop{\mathrm{char}}\nolimits}
\newcommand{\interior}[1]{\mathop{\mathrm{int}}\nolimits(#1)}

\newcommand{\al}{\alpha}

\newcommand{\w}{\omega}

\renewcommand{\phi}{\varphi}

\renewcommand{\b}{{b_1'(M)}}
\newcommand{\h}{{\cal H}}

\newcommand\phiq{\phi_\QSet}
\newcommand\hq{H_\QSet}

\newcommand{\connsum}{\mathrel\#}
\newcommand{\opConnsum}{\mathop\#}
\newcommand{\opTimes}{\mathop\times}

\newcommand{\from}{{\,:\;}}

\definecolor{foridiots}{RGB}{0,150,0}
\specialcomment{foridiot}{\bgroup\color{foridiots}}{\egroup}
\makeatletter
\@ifundefined{idiot}%
{\newcommand\textforidiot[1]{\leavevmode\unskip}\excludecomment{foridiot}}%
{\newcommand\textforidiot[1]{\textcolor{foridiots}{#1}}}
\makeatother

\hyphenation{mani-fold mani-folds com-pac-ti-fiable}

\begin{document}

\title{Isotropy index for the~connected~sum and~the~direct~product~of~manifolds}
  \author{I. Gelbukh}
    \email{gelbukh@member.ams.org} %
    \urladdr{http://www.i.gelbukh.com} 

\begin{abstract}
A subspace or subgroup is isotropic under a bilinear map if the restriction of the map on it is trivial.
We study maximal isotropic subspaces or subgroups under skew-symmetric maps, and in particular the isotropy index---the maximum dimension of an isotropic subspace or maximum rank of an isotropic subgroup. 
For a smooth closed orientable manifold $M$, we describe the geometric meaning of the isotropic subgroups of the first cohomology group with different coefficients under the cup product. We calculate the corresponding isotropy index, as well as the set of ranks of all maximal isotropic subgroups, for the connected sum and the direct product of manifolds. 
Finally, we study the relationship of the isotropy index with the first Betti number and the co-rank of the fundamental group. 
We also discuss applications of these results to the topology of foliations.
\end{abstract}

\newcommand\sep{, }
  \keywords{%
Isotropic subspace\sep 
cohomology\sep 
cup product.
}

\subjclass{%
15A63\sep%
15A03\sep%
58K65.%
}
  \maketitle %

\section{Introduction}

Let $M$ be a smooth closed orientable connected $n$-dimensional manifold.
We study \emph{iso\-tro\-pic} subgroups (subspaces) $H$ of its cohomology group (space) $H^1(M;R)$, where $R$ is a field or the ring of integers,
under the cup-product 
\begin{align}
{\smile}\colon H^1(M;R)\times H^1(M;R)\to H^2(M;R),
\label{eq:intro-cupprod}
\end{align}
i.e., $H\subseteq H^1(M;R)$ such that $H\smile H=0$. To avoid duplication of terminology, such as ``rank (dimension),'' we will refer to $H^i(M;R)$ as modules over the ring~$R$.

Specifically, we study the set $\h(M;R)$ of ranks of maximal isotropic submodules of $H^1(M;R)$ and the corresponding \emph{isotropy index} 
\begin{align}
h(M;R)=\max\h(M;R),
\label{eq:h=maxH}
\end{align}
the maximum rank of an isotropic submodule.  
We study the structure of (maximal) isotropic submodules for finite connected sums and direct products of manifolds. In particular, we show (Theorems~\ref{theor:conn_sum} and~\ref{theor:dir_prod}) that
\begin{align}
&\begin{alignedat}1
\h(M_1\connsum M_2;R)&=\h(M_1;R)+\h(M_2;R),\\ %
\h(M_1 \times M_2;R)&=\set{1}\cup \h(M_1;R)\cup \h(M_2;R), %
\end{alignedat}
\label{eq:intro-sum}
\shortintertext{and thus}
&\begin{alignedat}1
\hphantom{\h}\makebox[0pt][r]{$h$}(M_1 \connsum M_2;R)&=h(M_1;R)+h(M_2;R),\\
\hphantom{\h}\makebox[0pt][r]{$h$}(M_1 \times M_2;R)&=\max\set{h(M_1;R), h(M_2;R)}
\end{alignedat}
\label{eq:intro-prod}
\end{align}
under certain conditions and with certain exceptions described in the corresponding theorems (here the sum of sets is understood element-wise).

Isotropy index bounds
the co-rank $\b$ of the fundamental group, i.e., the maximum rank of a free homomorphic image of $\pi_1(M)$~\cite{Gelb10} and, obviously, is bounded by the first Betti number $b_1(M)=\rk H_1(M;\ZSet)$:
\begin{align}
\b\le h(M;\ZSet)\le b_1(M).
\label{eq:intro-b'<h<b_1}
\end{align}
In~\cite{Meln3}, 
for a field $F$
upper and lower bounds on $h(M;F)$ were given in terms of Betti numbers; see Proposition~\ref{prop:h-bounds}. 
Using~\eqref{eq:intro-sum} and~\eqref{eq:intro-prod}, for a given $R$ we 
describe all possible sets $\h(M;R)$ (Proposition~\ref{prop:hS})
and all possible values of $h(M,R)$ with different $M$ in terms of $b_1(M;R)=\rk H_1(M;R)$ (Theorem~\ref{theor:h,b}), 
as well as
extend~\eqref{eq:intro-b'<h<b_1} to fields of characteristic zero and show that in this case these bounds are exact
(Proposition~\ref{prop:b'<h<b1}).

The notion of isotropy has been studied in the context of algebraic geometry.
For instance,
isotropic subspace theorems by Catanese~\cite{Catanese} and Bauer~\cite{Bauer} establish relations between isotropic subspaces of $H^1(M;\CSet)$ for a smooth quasi-projective variety $M$ and certain irrational pencils. These theorems have been studied in~\cite{Dimca,Dimca-Su}.

The isotropy index has numerous applications to the topological study of manifolds and foliations.
As we show, isotropy for manifolds has a clear geometric meaning: 
(maximal) isotropic subgroups of $H^1(M;\ZSet)$ of rank $k$
correspond to (maximal) systems of $k$ homologically independent, homologically non-intersecting closed orientable codimension-one submanifolds, $h(M;\ZSet)$ being the maximum number of such submanifolds (Theorem~\ref{theor:h(M)_geometry}).
While this geometric meaning is defined for $R=\ZSet$,
we show that the relevant aspects of isotropy coincide for $R=\ZSet$ and $R=\QSet$
(Lemma~\ref{lem:h(G)=h(L)}), which 
enables the use of simpler, vector space-based techniques in geometric applications of isotropy.

Consider a foliation defined on $M$ by a Morse form $\w$, i.e., a closed one-form that is locally the differential of a Morse function. 
Such foliations have important applications in modern physics, for example, in supergravity~\cite{Bab-Laz-conf,Bab-Laz}.
A Morse form foliation defines a decomposition of $M$ into a finite number $m(\w)$ of minimal components and a finite number $M(\w)$ of maximal components, i.e., connected components of the union of compact leaves, which are cylinders over a compact leaf. 
These two numbers are bounded by $h(M;\ZSet)$:
$
M(\w)+m(\w)\le h(M;\ZSet)+|\Sing\w|-1,
$
where $\Sing\w$ is the singular set, which is finite~\cite{Gelb09}.
In homological terms, for the number $c(\w)$ of homologically independent compact leaves 
it holds
$
c(\w)+m(\w)\le h(M;\ZSet)
$~\cite{Gelb09}.

A sufficient condition of existence of a minimal component has been given in~\cite{Meln2} in terms of $\rk\w$, the rank of the group of the periods: if
$
\rk\w > h(M;\ZSet),
$
then the foliation has a minimal component, i.e., $m(\w)\ge1$.
Also, 
in case of strong inequality in the upper bound in~\eqref{eq:intro-b'<h<b_1}, i.e., if 
$
h(M;\ZSet)<b_1(M),
$
the foliation of a Morse form in general position has a minimal component~\cite{Gelb08}.

If the subgroup $H_\w\subseteq H_{n-1}(M)$ generated by the homology classes of all compact leaves of a Morse form foliation is maximal isotropic, then the foliation has no minimal components, i.e., $m(\w)=0$~\cite{Meln4}. 
Subgroups of $H_{n-1}(M)$ are related, by Poincar\'e duality, with those of $H^1(M;\ZSet)$.
In particular, if the homology classes of some compact leaves of a Morse form foliation generate a maximal isotropic subgroup, then $m(\w)=0$~\cite{Meln4}. 
Obviously, this is the case when the foliation has $h(M;\ZSet)$ homologically independent compact leaves. 
However, if $M=M_1\times M_2$, then in some cases our results
allow us to conclude that $m(\w)=0$
by examening only one leaf (Examples~\ref{ex:M22xS1-foliation} and~\ref{ex:H(M2gxM2g)}).

Isotropic submodules of $L$ can be defined for arbitrary bilinear map $\phi\from L\times L\to V$, where $L,V$ are finitely generated groups or finite-dimensional vector spaces. This gives the corresponding notions of $\h(\phi)$ and the isotropy index~$h(\phi)$ as in~\eqref{eq:h=maxH}. 
In order to establish our main results~\eqref{eq:intro-sum} and~\eqref{eq:intro-prod}, we study the behavior of isotropic submodules for skew-symmetric maps under operations of extension of scalars (Proposition~\ref{prop:h_over-diff-rings}) and direct sum $L_1\oplus L_2$ (Lemmas~\ref{lem:sum} and~\ref{lem:prod}).

By Poincar\'e duality, $\h(M;\ZSet)$ and $h(M;\ZSet)$ can be equivalently defined in terms of the intersection map 
$
\circ\colon H_1(M)\times H_1(M)\to\ZSet
$
instead of the cup product~\eqref{eq:intro-cupprod}.
For a closed one-form $\w$ on $M$,
the isotropy index $h(\w)$ is defined by the restriction $\circ|_{\ker[\w]\times \ker[\w]}$ of the intersection map 
to the group $\ker[\w]\subseteq H_1(M)$,
where $[\w]$ is the integration map.
This notion has been extensively used to study the structure of Morse form foliations on closed orientable surfaces $M^2_g$ of genus~$g$.
For example,
$
c(\w)\le h(\w);
$
if the foliation has no minimal components, then
$
h(\w)=g
$~\cite{Gelb13}.
For so-called weakly generic forms, 
$
m(\w)\ge g-\frac12k(\w)-h(\w)
$,
where $k(\w)$ is the number of singularities surrounded by a minimal component~\cite{Gelb13a}.
\textforidiot{$k(\w)=|\Sing\w\cap\bigcup_{i=1}^{m(\w)}\interior{\overline{C^{min}_i}}|$.}
Since $h(\w)\le h(M;\ZSet)$, these inequalities hold also for $h(M;\ZSet)$.

\medskip

The paper is organized as follows.
In Section~\ref{sec_def-spaces}, we give basic facts on isotropy in finite-dimensional vector spaces and finitely generated abelian groups.
In Section~\ref{sec_def_manif}, we introduce the isotropy index for manifolds and consider its properties and geometric meaning.
In Section~\ref{sec_conn-sum}, we calculate the isotropy index of the connected sum of two manifolds.
In Section~\ref{sec_dir-prod}, we calculate the isotropy index of the direct product of two manifolds and describe the possible sets $\h(M;R)$. 
In Section~\ref{sec_realization:h,b}, we completely characterize the relation between $h(M;R)$ and~$b_1(M;R)$.
Finally, in Section~\ref{sec_corank} we consider the relations between $h(M;R)$ and~$b_1'(M)$.

\section{\label{sec_def-spaces}Isotropy index for vector spaces and abelian groups}

In this section, we will define the isotropy index and discuss how it changes from groups to vector fields or between vector fields with different scalars.

We will deal with finite-dimensional vector spaces and finitely generated abelian groups. 
To avoid duplication of terminology, such as ``any subspace or subgroup'' or ``its dimension or rank'',
we will use terminology from $R$-modules, where $R$ will be a field or $\ZSet$, correspondingly. 
In particular, \emph{submodule} will stand for subspace or subgroup; 
\emph{rank} will stand for the dimension of a space or the rank of a group.

\subsection{Definitions}%

Let $L$, $V$ be  finitely generated  $R$-modules 
and $\varphi\colon L\times L\to V$ a bilinear map; $R$ is a field or $R=\ZSet$.

\begin{definition}\label{def:isotropic-space}
A submodule $H\subseteq L$ is called {\em isotropic} under the map $\varphi$
if $\varphi|_{H\times H}=0$, i.e., $\varphi (l_1,l_2)=0$ for any $l_1,l_2\in H$. 
\end{definition}

If $R$ is a field, $R$-modules $L$ and $V$ are  finite-dimensional vector spaces, so we deal with {\em isotropic subspaces}; if $R=\ZSet$, then $R$-modules  $L,V$ are finitely generated abelian groups, so we have {\em isotropic subgroups}.

Since $L$ is Noetherian, every isotropic submodule is contained in some maximal isotropic submodule, not necessarily unique.
\textforidiot{Since any finitely generated right module over a right Noetherian ring is a Noetherian module.}
Denote by $\h(\varphi)$ the set of ranks of maximal isotropic submodules under the map $\varphi$:
$$
\h(\varphi) =\set{\rk H\mid H\mbox{ is a maximal isotropic submodule of } L}.
$$
Obviously, $\h(\phi)$ is a finite set of non-negative integers such that
\begin{align}
0\notin\h(\phi)\text{\quad or\quad}\h(\phi)=\set{0}.
\label{eq:0notinHorH=0}
\end{align}
Proposition~\ref{prop:hS} below shows that these are the only restrictions on $\h(\phi)$.

\begin{definition}
The {\em isotropy index} $h(\varphi)$ 
is the maximum rank of the isotropic submodules of $L$: $$h(\varphi)=\max\h(\varphi).$$
\end{definition}

\begin{example}%
Consider the skew-symmetric map $\varphi\colon\RSet^3\times\RSet^3\to\RSet^3$, $\varphi(x,y)=[[x,y],l]$, where $l$ is a fixed vector and $[\;,\;]$ is the vector product.
For any vector $x\not\perp l$, for example $x=l$, the subspace $L_1=\langle x\rangle$, $\dim L_1=1$, is maximal isotropic, and so is
$L_2=l^{\perp}$, $\dim L_2=2$.
Thus 
$\h(\varphi)=\set{1,2}$, and
$h(\varphi)=2$.%
\end{example}

For skew-symmetric maps, usually $h(\varphi)\ge1$ and thus $0\notin\h(\phi)$:

\begin{lemma}\label{lem:h(L)=0}
Let $\phi$ be skew-symmetric.
Then $h(\varphi)=0$, i.e., $\h=\set{0}$, iff either
\begin{itemize}
\renewcommand\labelitemi{--}
\itemsep0em
\item $L=0$ or
\item $L=R$, 
$\har R=2$, and $\varphi\not\equiv 0$.
\end{itemize}
\end{lemma}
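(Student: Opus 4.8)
The plan is to prove both implications, disposing of the converse quickly and concentrating on the forward direction, where the characteristic-two case carries all the content. For the converse, if $L=0$ the only submodule is $0$, so $\h(\varphi)=\set{0}$. If $L=R$ is a field with $\har R=2$ and $\varphi\not\equiv0$, then the only submodules are $0$ and $L$; writing $L=\langle e\rangle$, bilinearity gives $\varphi(ae,be)=ab\,\varphi(e,e)$, so $\varphi\not\equiv0$ forces $\varphi(e,e)\neq0$ and hence $L$ is not isotropic. Thus $0$ is the unique maximal isotropic submodule and $h(\varphi)=0$, as claimed.

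For the forward direction I would assume $h(\varphi)=0$ and $L\neq0$ and derive the exceptional case. Skew-symmetry gives $\varphi(x,x)=-\varphi(x,x)$, i.e.\ $2\varphi(x,x)=0$ for every $x$. Suppose first that $\har R\neq2$. Then $\varphi(x,x)=0$ for all $x$, so every rank-one submodule $\langle x\rangle$ is isotropic: over a field this already yields $h(\varphi)\ge1$ whenever $L\neq0$, while over $\ZSet$, if $\rk L\ge1$ one picks $x$ of infinite order and observes $\varphi(2x,2x)=4\varphi(x,x)=0$, so $\langle 2x\rangle\cong\ZSet$ is isotropic of rank one. Either way $h(\varphi)\ge1$, a contradiction; hence $\rk L=0$, and so $L=0$ (the modules arising over $\ZSet$ being free). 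In particular $\har R=2$ necessarily.

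So assume $\har R=2$, where skew-symmetry coincides with symmetry. If $\varphi\equiv0$ then $L$ itself is isotropic and $h(\varphi)=\rk L\ge1$, so we must have $\varphi\not\equiv0$. It remains to show $\rk L=1$. I would consider the diagonal map $\psi\colon L\to V$, $\psi(x)=\varphi(x,x)$. In characteristic two $\psi(x+y)=\psi(x)+\psi(y)+2\varphi(x,y)=\psi(x)+\psi(y)$ and $\psi(ax)=a^2\psi(x)$, so $\ker\psi=\set{x\mid\langle x\rangle\text{ is isotropic}}$ is a submodule, which by $h(\varphi)=0$ must be trivial; thus $\psi$ is an injective, additive, Frobenius-semilinear map. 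To finish, I would choose $x$ with $\psi(x)\neq0$ and, for an arbitrary $y$, write $\psi(y)=c^2\psi(x)$ and compute $\psi(y+cx)=\psi(y)+c^2\psi(x)=0$, so $y+cx\in\ker\psi=0$ and $y\in\langle x\rangle$; hence $L=\langle x\rangle\cong R$.

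The main obstacle is precisely this last step. Unlike the odd case, in characteristic two $\varphi(x,x)$ need not vanish, so isotropic lines are no longer automatic and one must genuinely exhibit a nonzero isotropic vector inside every submodule of rank $\ge2$. Making the displayed computation rigorous is where the structure is used: one needs the ratio $\psi(y)/\psi(x)$ to be a square (so $R$ perfect) and the target effectively one-dimensional—as it is for the scalar-valued intersection form—so that $\psi$ becomes an injection into a rank-one module and $\rk L\ge2$ is impossible. Combined with $\varphi\not\equiv0$, this yields exactly the stated exception $L=R$, $\har R=2$, $\varphi\not\equiv0$.
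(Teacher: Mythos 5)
Your converse and your characteristic-$\ne 2$ argument are fine---indeed more careful than the paper's own proof, which dismisses that case in one line (your $\langle 2x\rangle$ device correctly handles possible $2$-torsion of $\varphi(x,x)$ in $V$ over $\ZSet$; the only slip there is the parenthetical claim that a rank-zero finitely generated group is zero, which fails for torsion groups---though for torsion $L$ one has $h(\varphi)=0$ vacuously, a gloss the lemma's statement itself shares). The genuine gap is exactly the one you flag yourself: in characteristic $2$, the step ``write $\psi(y)=c^2\psi(x)$'' is unavailable in the stated generality, since the diagonal values $\psi(x),\psi(y)\in V$ need not be proportional when $\rk V\ge2$, and even when they are, the ratio need not be a square when $R$ is imperfect. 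So your argument establishes the lemma only under the extra hypotheses you name at the end, not as stated.

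You should know, however, that this is not a defect of your route that the paper's route avoids: the paper's proof has the identical hole, hidden in the phrase ``consider independent $l_1,l_2$; $\varphi(l_i,l_i)=1$'', which silently normalizes two elements of the arbitrary module $V$ to a common unit value. Moreover, the gap cannot be filled, because the lemma is false in this generality. Take $R=\mathbb{F}_2$, $L=V=\mathbb{F}_2^2$, and $\varphi(x,y)=(x_1y_1,\,x_2y_2)$: this is symmetric, hence skew-symmetric in characteristic $2$, and $\varphi(x,x)=(x_1,x_2)\ne0$ for every $x\ne0$, so $h(\varphi)=0$ while $\rk L=2$. This $\varphi$ is even realized topologically: by Example~\ref{ex:RP3} and the Mayer--Vietoris splitting used in the proof of Theorem~\ref{theor:conn_sum}, it is the cup product on $H^1(\RSet P^3\connsum\RSet P^3;\ZSet_2)$. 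Your perfectness caveat is likewise sharp in the scalar-valued case: over $R=\mathbb{F}_2(t)$ with $V=R$, the symmetric form with $\psi(x)=x_1^2+t\,x_2^2$ admits no nonzero isotropic vector, since $t$ is not a square in $\mathbb{F}_2(t)$. So your closing diagnosis---that one needs $V$ effectively of rank one and $R$ perfect---identifies precisely the hypotheses under which the argument (yours or the paper's) actually closes, and hence a needed correction to the lemma itself.
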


\begin{proof}
Let $h(\varphi)=0$, then $\phi(l,l)\ne0$ for any $0\ne l\in L$.
Unless $L=0$, for a skew-symmetric map this implies $\har R=2$.
Suppose $\rk L\ge 2$. Consider independent $l_1,l_2\in L$; $\varphi(l_i,l_i)=1$. Then for $l=l_1+l_2\ne0$, we have $\varphi(l,l)=0$, a contradiction.
\end{proof}

The {\em kernel} of a bilinear map $\phi\from L\times L\to V$ is 
$$
\ker\varphi=\set{l\in L\mid\varphi (l,l')=0\;\mbox{for any}\; l'\in L}.
$$
Obviously, $\ker\varphi$ is an isotropic submodule; moreover, any maximal isotropic submodule contains $\ker\varphi$, so $h(\varphi)\ge\dim\ker\varphi$. 

\subsection{Isotropy index for different coefficients}%

Generally speaking, 
the iso\-tropy index
depends on the coefficients. 
Namely, let $L$, $V$ be finitely generated abelian groups and\/ ${\phi}\colon L\times L\to V$ a skew-symmetric bilinear map. 
For a field $F$, consider the corresponding vector spaces 
\begin{align*}
L_F&=F\otimes L,\\
V_F&=F\otimes V 
\end{align*}
and the induced skew-symmetric bilinear map
$$
{\phi}_F\colon L_F\times L_F\to V_F,\quad \phi_F(\al_1\otimes x_1, \al_2\otimes x_2)=\al_1\al_2\otimes\phi(x_1, x_2).
$$
The isotropy index depends on the field $F$, and generally $h({\phi})\ne h({\phi}_F)$:

\begin{example}
Consider $\phi\from\ZSet\times\ZSet\to\ZSet_2=\ZSet/2\,\ZSet$, $\phi(1,1)=1$. 
It has an isotropic subgroup is $2\,\ZSet$, thus $h(\phi)=1$. 
Similarly, for $F=\QSet$, we have $\phi_\QSet\from\ZSet\times\ZSet\to0$, thus again $h(\phi)=1$, in accordance with~\eqref{eq:hZ=hQ} below.
However, for $F=\ZSet_2$, we have $L_F=\ZSet_2\otimes\ZSet=\ZSet_2$, $V_F=\ZSet_2\otimes\ZSet_2=\ZSet_2$, so ${\phi}_{\ZSet_2}\colon \ZSet_2\times\ZSet_2\to\ZSet_2$, $\phi(1,1)=1$;
obviously, ${\phi}_{\ZSet_2}^{-1}(0)=0$
and thus $h({\phi}_{\ZSet_2})=0$:
\begin{align*}
h(\phi_{\ZSet_2})<h(\phi)=h(\phi_\QSet).
\end{align*}
On the other hand, consider ${\phi}\from\ZSet^2\times\ZSet^2\to\ZSet$ defined by the matrix
{\scriptsize
$\begin{pmatrix*}[r]
0&k\\
\!-k&0
\end{pmatrix*}
\!,
$
}
$k\ge2$.
Then $h(\phi)=h_\QSet(\phi)=1$, but $\phi_{\ZSet_p}\from\ZSet_p^2\times\ZSet_p^2\to0$ (thus $h(\phi_{\ZSet_p})=2$) iff $p\mid k$. So for $p\mid k$ and $q\nmid k$ we have:
\begin{align*}
h(\phi)=h(\phi_\QSet)=h(\phi_{\ZSet_q})<h(\phi_{\ZSet_p}).
\end{align*}
\end{example}

However, extension of scalars for vector spaces does not decrease $h(\phi_F)$:

\begin{lemma}\label{lem:h_ext-of-scalars}
Let $L_F,V_F$ be finite-dimensional vector spaces over a filed~$F$ and ${\phi_F}\colon L_F\times L_F\to V_F$ be a skew-symmetric bilinear map.
Let $F'$ be a field, $F\subseteq F'$, 
\begin{align*}
L_{F'}&=F'\otimes_F L,\\
V_{F'}&=F'\otimes_F V
\end{align*}
vector spaces obtained from $L_F$, $V_F$ by extension of scalars, and $\phi_{F'}$ the induced map:
\begin{align}
{\phi_{F'}}\colon L_{F'}\times L_{F'}\to V_{F'},\quad \phi_{F'}(\al'_1\otimes x_1, \al'_2\otimes x_2)=\al'_1\al'_2\otimes\phi_F(x_1, x_2).
\label{eq:phi_F'}
\end{align}
Then 
$$
h(\phi_F)\le h(\phi_{F'}).
$$
\end{lemma}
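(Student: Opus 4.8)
The plan is to take a maximal isotropic subspace $H\subseteq L_F$ realizing $h(\phi_F)$ and show that its extension of scalars $F'\otimes_F H\subseteq L_{F'}$ is isotropic under $\phi_{F'}$, which immediately gives $h(\phi_{F'})\ge\dim_{F'}(F'\otimes_F H)=\dim_F H=h(\phi_F)$. The point is that the dimension is preserved under extension of scalars (tensoring with $F'$ over $F$ is exact and sends a basis to a basis), so the only real content is the isotropy of the extended subspace.

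To verify isotropy, I would argue as follows. Choose an $F$-basis $e_1,\dots,e_k$ of $H$, so that $\phi_F(e_i,e_j)=0$ for all $i,j$ by isotropy of $H$. The subspace $F'\otimes_F H$ is spanned over $F'$ by the elements $1\otimes e_i$, and a general pair of its elements has the form $\sum_i \al'_i\otimes e_i$ and $\sum_j \beta'_j\otimes e_j$ with $\al'_i,\beta'_j\in F'$. Using bilinearity of $\phi_{F'}$ together with the defining formula~\eqref{eq:phi_F'}, I compute
\begin{align*}
\phi_{F'}\!\left(\sum_i \al'_i\otimes e_i,\ \sum_j \beta'_j\otimes e_j\right)
=\sum_{i,j}\al'_i\beta'_j\otimes\phi_F(e_i,e_j)=0,
\end{align*}
since every $\phi_F(e_i,e_j)$ vanishes. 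Hence $F'\otimes_F H$ is isotropic under $\phi_{F'}$.

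It remains to address the dimension count carefully, which is where the main (though mild) obstacle lies: I must confirm that the $1\otimes e_i$ are genuinely $F'$-linearly independent in $L_{F'}$, so that $\dim_{F'}(F'\otimes_F H)=k$. This follows because $F'$ is free — hence flat — as an $F$-module, so the inclusion $H\hookrightarrow L_F$ induces an injection $F'\otimes_F H\hookrightarrow F'\otimes_F L_F=L_{F'}$, and a basis of $H$ maps to an $F'$-basis of its image. One should also note that the extended isotropic subspace need not be \emph{maximal} in $L_{F'}$; but this is irrelevant, since $h(\phi_{F'})$ is the maximum over all maximal isotropic subspaces, and any isotropic subspace is contained in a maximal one of at least its own rank.

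Putting these together yields
$$
h(\phi_{F'})\ge\dim_{F'}(F'\otimes_F H)=\dim_F H=h(\phi_F),
$$
which is the desired inequality. Note that the reverse inequality need not hold, consistent with the earlier example where passing to a larger field (or a residue field of different characteristic) strictly increases the isotropy index; the asymmetry here reflects that extension of scalars can only enlarge, never shrink, the collection of isotropic subspaces.
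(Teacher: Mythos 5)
Your proposal is correct and follows essentially the same route as the paper's proof: extend a maximum-dimensional isotropic subspace $H\subseteq L_F$ by scalars to $F'\otimes_F H\subseteq L_{F'}$, observe that the dimension is preserved, and verify isotropy directly from the defining formula~\eqref{eq:phi_F'}. Your additional care about flatness (injectivity of $F'\otimes_F H\hookrightarrow L_{F'}$) and about the extended subspace possibly failing to be maximal only makes explicit what the paper leaves implicit.
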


\begin{proof}
Consider an isotropic subspace $H_F\subseteq L_F$, $\dim H_F=h(\phi_F)=k$. 
A basis $\left\langle e_1,\dots,e_k\right\rangle=H_F$ can be be extended to a basis for $L_F$; thus $H_F= F^k$.
Extension of scalars from $F$ to $F'$ gives $H_{F'}={F'}\otimes_F H_F={F'}\otimes_F F^k={(F')}^k$, so $\dim H_{F'}=\dim H_F$.
By~\eqref{eq:phi_F'}, the subspace $H_{F'}$ is isotropic, i.e., $k=\dim H_{F'}\le h(\phi_{F'})$. 
We obtain $h(\phi_F)\le h(\phi_{F'})$.
\end{proof}

A stronger fact holds for groups and $\QSet$:

\begin{lemma}\label{lem:h(G)=h(L)}
Let $L$, $V$ be finitely generated abelian groups
and\/ ${\phi}\colon L\times L\to V$ be a skew-symmetric bilinear map. Denote by 
\begin{alignat*}{2}
L_\ZSet&=L,\qquad &L_\QSet&=\QSet\otimes L,\\
V_\ZSet&=V,\qquad &V_\QSet&=\QSet\otimes V
\end{alignat*}
the corresponding $\ZSet$- and $\QSet$-modules.
Let
$$
{\phiq}\colon L_\QSet\times L_\QSet\to V_\QSet,\quad \phiq(q_1\otimes x_1, q_2\otimes x_2)=q_1q_2\otimes\phi(x_1, x_2)
$$
be the induced skew-symmetric bilinear map. 
Then 
\begin{enumerate}
\renewcommand{\labelenumi}{$\mathrm{\theenumi}$}
\renewcommand{\theenumi}{(\roman{enumi})}
\item\label{item:G->L}
for every maximal isotropic subgroup $H\subseteq L$, the subspace $\hq=\QSet\otimes H\subseteq L_\QSet$ is maximal isotropic;
\item\label{item:L->G}
for every maximal isotropic subspace $\hq\subseteq L_\QSet$, there is a maximal isotropic subgroup $H\subseteq L$ such that $\hq=\QSet\otimes H$.
\end{enumerate}
In particular, \textforidiot{since $\rk H=\dim\QSet\otimes H$}
\begin{align}
\begin{aligned}
\h({\phi})&=\h({\phiq}),\\
h({\phi})&=h({\phiq}).
\end{aligned}
\label{eq:hZ=hQ}
\end{align}
\end{lemma}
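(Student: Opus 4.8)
The plan is to exploit that tensoring with $\QSet$ kills torsion while preserving rank, so that $\dim(\QSet\otimes H)=\rk H$ for every subgroup $H\subseteq L$; thus \ref{item:G->L} and \ref{item:L->G} will set up a rank-preserving correspondence yielding the displayed equalities. The forward implication is immediate: if $H$ is isotropic over $\ZSet$, then $\phi(h,h')=0$ holds in $V$ for all $h,h'\in H$, and bilinearity of $\phiq$ gives $\phiq(\QSet\otimes H,\QSet\otimes H)=0$, so $\QSet\otimes H$ is isotropic. The real content is that maximality is preserved in both directions.

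The main obstacle is that isotropy over $\QSet$ is strictly \emph{weaker} than isotropy over $\ZSet$: if $1\otimes l$ and $1\otimes l'$ lie in an isotropic subspace, then $1\otimes\phi(l,l')=0$ in $V_\QSet$ only forces $\phi(l,l')$ into the torsion subgroup $T$ of $V$, not to vanish. Hence the naive pullback of an isotropic subspace need not be isotropic over $\ZSet$ (this already fails for $L=\ZSet$, $V=\ZSet_2$). The device that repairs this is to scale by the exponent $N$ of $T$: since $V$ is finitely generated, $T$ is finite, so $N\,t=0$ for every $t\in T$; scaling by $N$ therefore turns $T$-valued pairings into genuine zeros while leaving $\QSet\otimes(-)$ unchanged, as $N$ is invertible in $\QSet$.

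For~\ref{item:G->L} I would argue by contradiction. If $\QSet\otimes H$ were not maximal, pick $w$ in a strictly larger isotropic subspace with $w\notin\QSet\otimes H$, and clear denominators to write $m\,w=1\otimes l$ with $l\in L$, so that $1\otimes l\notin\QSet\otimes H$. Isotropy over $\QSet$ gives $\phi(h,l)\in T$ and $\phi(l,l)\in T$ for all $h\in H$. Setting $l'=N\,l$, we get $\phi(h,l')=N\,\phi(h,l)=0$ and $\phi(l',l')=N^2\phi(l,l)=0$; together with $\phi(H,H)=0$, bilinearity makes $\langle H,l'\rangle$ genuinely isotropic over $\ZSet$. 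Since $1\otimes l\notin\QSet\otimes H$ forces $1\otimes l'=N(1\otimes l)\notin\QSet\otimes H$, the element $l'$ lies outside $H$, so $\langle H,l'\rangle\supsetneq H$ is a strictly larger isotropic subgroup, contradicting maximality of $H$.

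For~\ref{item:L->G}, given a maximal isotropic subspace $\hq$, I would take its full preimage $L_0=\set{l\in L\mid 1\otimes l\in\hq}$, a saturated subgroup with $\QSet\otimes L_0=\hq$ (again by clearing denominators), hence $\rk L_0=\dim\hq$. Isotropy of $\hq$ gives $\phi(L_0,L_0)\subseteq T$, so by the scaling trick $N\,L_0$ is isotropic over $\ZSet$ and still satisfies $\QSet\otimes(N\,L_0)=\hq$. Extending $N\,L_0$ to a maximal isotropic subgroup $H$ (possible since $L$ is Noetherian), the forward implication makes $\QSet\otimes H$ isotropic with $\QSet\otimes H\supseteq\hq$; maximality of $\hq$ then forces $\QSet\otimes H=\hq$, as required. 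Combining the two directions with $\rk H=\dim(\QSet\otimes H)$ throughout gives $\h(\phi)=\h(\phiq)$, and taking maxima gives $h(\phi)=h(\phiq)$.
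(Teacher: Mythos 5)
Your proof is correct and takes essentially the same route as the paper's: both rest on the same torsion-killing device (the paper scales by some $k\ne0$ annihilating the finitely many relevant torsion classes of $V$, you by the exponent $N$ of the torsion subgroup), followed by extension to a maximal isotropic subgroup via Noetherianness and the rank equality $\rk H=\dim(\QSet\otimes H)$. Your variations---arguing (i) by contradiction and taking the saturated preimage $L_0$ in (ii) instead of the subgroup generated by the $h_i$ from a basis of $\hq$---are cosmetic, and if anything you are slightly more careful than the paper at the step $\phi(l',l')=0$, which skew-symmetry alone does not guarantee over $\ZSet$ when $V$ has $2$-torsion.
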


\begin{proof}
\ref{item:G->L}
Let $H\subseteq L$ be a maximal isotropic subgroup, $H=\langle h_1,\dots,h_n\rangle$.
Then $\hq=\QSet\otimes H\subseteq L_\QSet$ is an isotropic subspace, $\dim\hq=\rk H$.
\textforidiot{This is by Lemma~\ref{lem:dimQxG=rkG}.}
Consider $0\ne q\otimes x\in L_\QSet$ such that $\phiq(q\otimes x,\hq)=0$, i.e., all $\phi(x, h_i)\in V_T$, the torsion subgroup.
\textforidiot{This is by Lemma~\ref{lem:qxg=0=>q_in_T}.}
Then for some $k\ne0$,
\textforidiot{$k=\prod k_i$, where $k_ih_i=0$, $k_i\ne0$,}
we have $\phi(kx, h_i)=0$. 
Since $H$ is maximal, $kx\in H$ and thus 
\textforidiot{$1\otimes kx\in H_\QSet$, which gives}
$q\otimes x\in H_\QSet$.
Therefore, $H_\QSet$ is maximal.

\ref{item:L->G}
Let $H_\QSet\subseteq L_\QSet$ be a maximal isotropic subspace, $\hq=\langle q_1\otimes h_1,\dots,q_n\otimes h_n\rangle$, a basis.
Consider $H'=\langle h_1,\dots,h_n\rangle$. 
Then all $\phi(h_i,h_j)\in V_T$; thus for some $k\ne0$, all $\phi(kh_i, kh_j)=0$.
We obtain $H_\QSet=\QSet\otimes H''$ for an isotropic subgroup $H''=kH'=\set{kx\mid x\in H'}$, $k\ne0$.
\textforidiot{This is by Lemma~\ref{QxG=QxkG}.}

Consider a maximal isotropic subgroup $H\supseteq H''$. 
\textforidiot{This is by Lemma~\ref{H_in_maxH}.}
For any $x\in H$, we have $\phi(x, H'')=0$ and thus $\phiq(1\otimes x,\hq)=0$. 
Since $H_\QSet$ is maximal, $1\otimes x\in H_\QSet$.
We obtain $H_\QSet=\QSet\otimes H$\textforidiot{ and thus $\rk H=\dim H_\QSet$}.
\textforidiot{This is by Lemma~\ref{lem:dimQxG=rkG}.}
\end{proof}

Lemma~\ref{lem:h(G)=h(L)} allows formulating Lemma~\ref{lem:h_ext-of-scalars} for fields or $\ZSet$:

\begin{proposition}\label{prop:h_over-diff-rings}
Let $L_R,V_R$ be finitely generated $R$-modules, $R$ being a field or $\ZSet$, and $\phi_R\colon L_R\times L_R\to V_R$ be a skew-symmetric bilinear map.
Let $R'$ be a field,
$R\subseteq R'$,
\begin{align*}
L_{R'}&=R'\otimes_R L,\\
V_{R'}&=R'\otimes_R V
\end{align*}
modules obtained by extension of scalars, and 
$
\phi_{R'}\colon L_{R'}\times L_{R'}\to V_{R'}
$
the induced map.
Then 
\begin{align}
h(\phi_R)\le h(\phi_{R'}).
\label{eq:h<h}
\end{align}
\end{proposition}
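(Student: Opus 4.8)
The plan is to reduce the statement to the two previously established lemmas by a case analysis on $R$.

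If $R$ is a field, then $R\subseteq R'$ is an inclusion of fields, $\phi_{R'}$ is obtained from $\phi_R$ exactly by the extension of scalars of Lemma~\ref{lem:h_ext-of-scalars}, and \eqref{eq:h<h} is precisely its conclusion; nothing further is required.

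The remaining case is $R=\ZSet$, so that $\phi_R=\phi$ is a skew-symmetric map of finitely generated abelian groups. Since $R'$ is a field containing $\ZSet$, its prime subfield is $\QSet$, whence $\QSet\subseteq R'$. The key point is the transitivity of extension of scalars: as $R'$ is a $\QSet$-algebra,
$$
L_{R'}=R'\otimes_\ZSet L\cong R'\otimes_\QSet(\QSet\otimes_\ZSet L)=R'\otimes_\QSet L_\QSet,
$$
and similarly $V_{R'}\cong R'\otimes_\QSet V_\QSet$, where $L_\QSet,V_\QSet,\phi_\QSet$ are as in Lemma~\ref{lem:h(G)=h(L)}. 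Under these isomorphisms $\phi_{R'}$ is identified with the map obtained by extending $\phi_\QSet$ from $\QSet$ to $R'$ via \eqref{eq:phi_F'}; I would confirm this by evaluating both maps on elementary tensors.

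Granting this identification, I would chain the two lemmas. Lemma~\ref{lem:h(G)=h(L)} gives $h(\phi)=h(\phi_\QSet)$, and Lemma~\ref{lem:h_ext-of-scalars}, applied to the field inclusion $\QSet\subseteq R'$, gives $h(\phi_\QSet)\le h(\phi_{R'})$; together these yield $h(\phi_R)=h(\phi)=h(\phi_\QSet)\le h(\phi_{R'})$, which is \eqref{eq:h<h}. The only genuine obstacle is the transitivity-of-scalars bookkeeping in this case: one must check that passing from $\phi$ to a map over $R'$ directly, and passing first to $\phi_\QSet$ and then to $R'$, produce the same skew-symmetric bilinear map, so that Lemma~\ref{lem:h_ext-of-scalars} genuinely applies to $\phi_\QSet$. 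This is a routine verification on generators rather than a conceptual difficulty.
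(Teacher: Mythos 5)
Your proof is correct and follows exactly the route the paper intends: the paper states this proposition without a separate proof, presenting it as the combination of Lemma~\ref{lem:h(G)=h(L)} (reducing $R=\ZSet$ to $\QSet$) with Lemma~\ref{lem:h_ext-of-scalars} (extension of scalars over fields), noting afterwards that the $\ZSet$ case works precisely because $\ZSet\subset\QSet\subseteq R'$. Your explicit verification of the transitivity of scalars $R'\otimes_\ZSet L\cong R'\otimes_\QSet L_\QSet$ is the routine bookkeeping the paper leaves implicit.
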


In particular, in addition to extension of scalars of vector spaces, \eqref{eq:h<h} holds for a group and a corresponding vector space over $F$, $\har F=0$, since $\ZSet\subset\QSet\subseteq F$.

\section {\label{sec_def_manif}Isotropy index for manifolds}

In this section, we introduce maximal isotropic subgroups (subspaces) of the first cohomology group (space) and the isotropy index for manifolds and discuss their geometric meaning and properties.

\subsection {Definitions}%

Let $M$ be a smooth closed orientable $n$-dimensional manifold. %
Consider the cup product
\begin{align*}
{\smile}\colon &H^1(M;R)\times H^1(M;R) \to H^2(M;R),
\end{align*}
where $R=\ZSet$ or $R$ is a field. It is a skew-symmetric bilinear map, and $H^k(M;R)$ are finitely generated $R$-modules; in case of a field, $H^k(M;R)$ are vector spaces.

\begin{definition}
A submodule $H\subseteq H^1(M;R)$ is called {\em isotropic} if it is isotropic under $\smile$ in the sense of Definition~\ref{def:isotropic-space}, i.e., if
the restriction of the cup-product 
to $H\times H$ is zero: ${\smile}|_{H\times H}=0$. 
\end{definition}

Accordingly, we denote by $\h(M;R)$ the set of ranks of maximal isotropic submodules:
$$
\h(M;R) =\set{\dim H\mid H\mbox{\, is a maximal isotropic submodule of } H^1(M;R)};
$$
Proposition~\ref{prop:hS} below shows that~\eqref{eq:0notinHorH=0} is still the only restriction on $\h(M;R)$, i.e., that
almost any set of non-negative integers is $\h(M;R)$ for some manifold~$M$.

The {\em isotropy index} $$h(M;R)=\max\h(M;R)$$ is the maximum rank of iso\-tro\-pic submodules of $H^1(M;R)$.

Lemma~\ref{lem:h(G)=h(L)} allows us to work interchangeably with $H^1(M;\ZSet)$ and $H^1(M;\QSet)$:

\begin{lemma}
\label{lem:h=h_Q}
For a smooth closed orientable manifold $M$,
there exists a maximal isotropic subgroup $H\subseteq H^1(M;\ZSet)$, $\rk H=k$, iff there exists a maximal isotropic subspace $H_Q\subseteq H^1(M;\QSet)$, $\dim H_Q=k$, i.e.,
\begin{align*}
\h(M;\ZSet)&=\h(M;\QSet),\\
h(M;\ZSet)&=h(M;\QSet).
\end{align*}
\end{lemma}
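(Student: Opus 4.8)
The plan is to deduce Lemma~\ref{lem:h=h_Q} as an immediate specialization of Lemma~\ref{lem:h(G)=h(L)}, since the latter is stated for an arbitrary skew-symmetric bilinear map on finitely generated abelian groups, and the cup product on the first cohomology is exactly such a map. First I would set $L=H^1(M;\ZSet)$ and $V=H^2(M;\ZSet)$, and take $\phi={\smile}$ to be the cup product, which is skew-symmetric and bilinear, with $L$ and $V$ finitely generated because $M$ is a smooth closed manifold (hence has finitely generated cohomology). Thus the hypotheses of Lemma~\ref{lem:h(G)=h(L)} are satisfied.

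The one genuine point to verify is that the induced map $\phi_\QSet$ from Lemma~\ref{lem:h(G)=h(L)} coincides with the cup product over $\QSet$, i.e., that $\phiq$ on $\QSet\otimes H^1(M;\ZSet)$ agrees with ${\smile}$ on $H^1(M;\QSet)$ under the natural identifications. This is where I would invoke the universal-coefficient theorem: the natural map $\QSet\otimes H^1(M;\ZSet)\to H^1(M;\QSet)$ is an isomorphism (the $\mathrm{Tor}$ term vanishes since $\QSet$ is flat, or equivalently torsion-free), and likewise $\QSet\otimes H^2(M;\ZSet)\cong H^2(M;\QSet)$. Under these isomorphisms, naturality of the cup product with respect to the change-of-coefficients homomorphism $\ZSet\to\QSet$ guarantees that the rationalized integral cup product is identified with the rational cup product. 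Hence $\phiq$ is precisely the cup product $\smile$ on $H^1(M;\QSet)$.

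With this identification in place, the conclusion follows directly from~\eqref{eq:hZ=hQ} in Lemma~\ref{lem:h(G)=h(L)}: parts~\ref{item:G->L} and~\ref{item:L->G} give the bijective correspondence between maximal isotropic subgroups of $H^1(M;\ZSet)$ and maximal isotropic subspaces of $H^1(M;\QSet)$ preserving rank, so $\h(M;\ZSet)=\h(M;\QSet)$ and therefore $h(M;\ZSet)=h(M;\QSet)$ as well.

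The main obstacle, such as it is, is entirely the bookkeeping of naturality: one must make sure the algebraically-defined map $\phiq$ really is the topologically-defined rational cup product, rather than merely an abstractly isomorphic map. Once the universal-coefficient identification and the naturality of $\smile$ under coefficient change are in hand, the statement is a formal corollary and requires no further geometric input; everything geometric has already been absorbed into Lemma~\ref{lem:h(G)=h(L)}.
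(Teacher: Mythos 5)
Your proof is correct and takes essentially the same route as the paper, which states Lemma~\ref{lem:h=h_Q} without separate argument as an immediate application of Lemma~\ref{lem:h(G)=h(L)} to the cup product on $L=H^1(M;\ZSet)$, $V=H^2(M;\ZSet)$. Your only addition is to spell out the universal-coefficient and naturality bookkeeping identifying $\phiq$ with the cup product on $H^1(M;\QSet)$, a verification the paper leaves implicit.
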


\subsection {Geometric meaning of $h(M;\ZSet)$}%

The notions of $\h(M;\ZSet)$ and $h(M;\ZSet)$ have a clear geometric meaning, which can be characterized as follows:

\begin{definition}\label{def:isotropic-system}
An {\em isotropic system} on a manifold $M$, $\dim M\ge2$, is a set of homologically non-in\-ter\-sec\-ting 
homologically independent smooth closed orientable connected co\-di\-mension-one submanifolds $X_1,\dots,X_k\subset M$, intersecting transversely:
\begin{align}
[X_i\cap X_j]=0,
\label{eq:[XY]=0}
\end{align}
$i\ne j$; $i,j=1,\dots,k$. 
\end{definition}

The requirement~\eqref{eq:[XY]=0} cannot be simplified to $X_i\cap X_j=\emptyset$, since on some manifolds there exist submanifolds with non-empty, but homologically trivial, intersection:

\begin{example}\label{ex:Heisenberg-detailed}
On the Heisenberg 3-nilmanifold, any two 
homologically independent smooth closed orientable 2-submanifolds have non-empty, but homologically trivial, intersection.
This will be shown as Example~\ref{ex:Heisenberg} below; here we only give a graphical illustration.

The Heisenberg 3-nilmanifold $H^3$ is a $T^2$-bundle over the circle $S^1$, 
with the monodromy being
a Dehn twist 
$f\from T^2\to T^2$, defined as the quotient space
$$
H^3=\frac{[0,1]\times T^2}{(1,x)\sim (0,f(x))},\quad f=\begin{pmatrix} 1 & 0 \\ 1 & 1\end{pmatrix}.
$$
For the basis cycles $a$, $c$ of the torus $T^2$, we have
\begin{align*}
f_*(a)&=a+c;\\
f_*(c)&=c;
\end{align*}
see \figurename~\ref{fig:Heisengerg}({\it a}).
The cycle $c$ is homologically trivial, being realized by the boundary of a 2-submanifold (torus without a disk) shown in \figurename~\ref{fig:Heisengerg}({\it a}).
However, 
for the two submanifolds $T_i=T^2$ shown in \figurename~\ref{fig:Heisengerg}({\it b}), we have $[T_1\cap T_2]=c$.

\begin{figure}[ht]
\centering
\setlength\unitlength{0.2ex}
\vspace{10\unitlength}%
\includegraphics[width=100\unitlength]{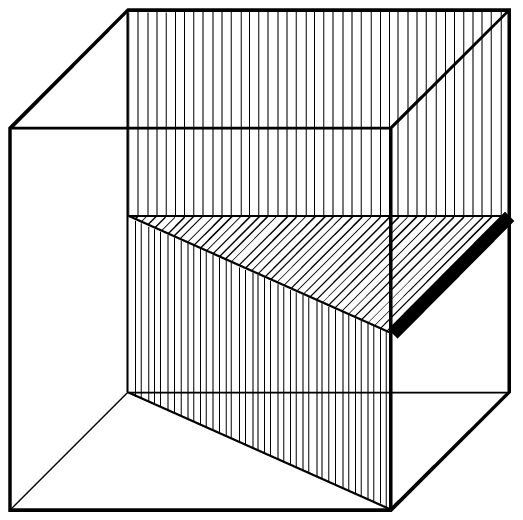}%
\begin{picture}(0,0)(100,0) %
\put(60,103){$a$}
\put(-8,35){$b$}
\put(87,37){$c$}
\put(25,14){\rotatebox{-25}{$a+c$}}
\put(50,-15){\makebox[0pt][c]{({\it a})}}
\end{picture}
\hspace{25\unitlength}
\includegraphics[width=100\unitlength]{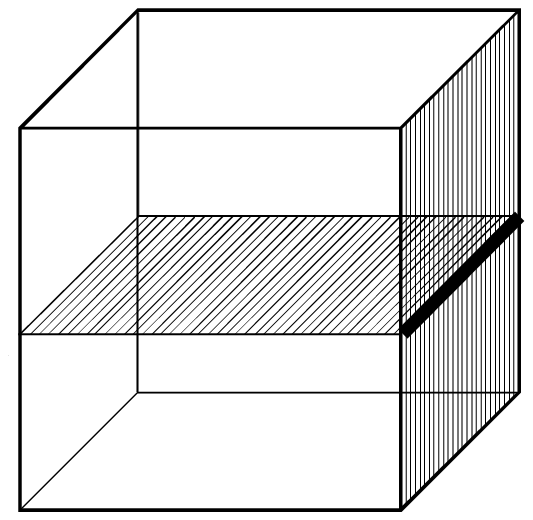}%
\begin{picture}(0,0)(100,0)
\put(45,63){$T_1$}
\put(103,37){$T_2$}
\put(50,-15){\makebox[0pt][c]{({\it b})}}
\end{picture}%
\vspace{15\unitlength}%
\caption{\label{fig:Heisengerg}
The Heisenberg nilmanifold $H^3$, represented as a $T^2$-bundle over the circle $S^1$. 
The circle is shown as the vertical line $b$; $T^2$ is shown as a horizontal square with the sides $a$ and $c$, the opposite sides of the square being identified.
The top is identified with the bottom with the Dehn twist: $a\sim a+c$, $c\sim c$; all four vertical lines are identified.
(a) The curve realizing $c$ is the boundary of a 2-submanifold shown as the hatched rectangle, triangle, and another rectangle; thus $c=0$.
The triangle forms a disk with two holes, which are glued to the cylinder formed by the two rectangles; the resulting figure is a torus with a disk removed, whose boundary realizes~$c$.
(b)~The two tori $T_1$, $T_2$ intersect by a curve realizing $c$, thus $[T_1\cap T_2]=0$. They cannot be made non-intersecting; see 
Example~\protect\ref{ex:Heisenberg}.}
\end{figure}

An algebraic model of $H^3$ can be given as follows: consider the 3-dimensional Heisenberg group over a ring $R$,
$$
H(R)=\set{\begin{pmatrix} 1 & x & z \\ 0 & 1 & y \\ 0 & 0 & 1\end{pmatrix}\mid x,y,z\in R};
$$
then the Heisenberg nilmanifold $H^3=H(\RSet)/H(\ZSet)$ is the quotient of the real Heisenberg group by the discrete Heisenberg subgroup.
It is a compact orientable connected 3-manifold with Nil geometry.
\end{example}

Definition~\ref{def:isotropic-system} implies the cardinality of an isotropic system $k\le b_1(M)$, the Betti number; thus each isotropic system is contained in a maximal isotropic system.

The following theorem relies on the fact that
homology classes $z\in H_{n-1}(M)$ can be realized by smooth closed orientable connected co\-di\-mension-one submanifolds.

\begin{theorem}\label{theor:h(M)_geometry}
Let $M$ be a smooth closed orientable connected manifold, $\dim M\ge2$,
and $D\colon H^1(M;\ZSet)\to H_{n-1}(M)$ be a Poincar\'e duality map. 
Then:
\begin{enumerate}
\renewcommand{\labelenumi}{$\mathrm{\theenumi}$}
\renewcommand{\theenumi}{(\roman{enumi})}
\item%
Let $\set{X_i}$ be a (maximal) isotropic system. 
Then $\set{D^{-1}[X_i]}$ form a basis of a (maximal) isotropic subgroup $H\subseteq H^1(M;\ZSet)$.
\item%
Let $\set{x_i}$ be a basis of a (maximal) isotropic subgroup $H\subseteq H^1(M;\ZSet)$. 
Then $\set{Dx_i}$ can be realized by submanifolds $X_i$ that form
a (maximal) isotropic system.
\end{enumerate}
In particular, 
\begin{itemize}
\renewcommand\labelitemi{--}
\item
$\h(M;\ZSet)=\set{k\mid\,X_1,\dots,X_k\subset M\text{ is a maximal isotropic system}}$;
\item
the isotropy index $h(M;\ZSet)$ is the maximum 
cardinality of
an isotropic system
of submanifolds of $M$.
\end{itemize}
\end{theorem}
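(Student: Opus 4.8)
The plan is to translate everything into homology via the Poincar\'e duality isomorphism $D\colon H^1(M;\ZSet)\to H_{n-1}(M)$ and to replace the cup product by the intersection product. Recall that $H^1(M;\ZSet)$ is torsion-free, so ``basis'' and ``rank'' behave as expected, and that under $D$ the cup product corresponds to the intersection pairing $\circ\colon H_{n-1}(M)\times H_{n-1}(M)\to H_{n-2}(M)$, so that $x\smile y=0$ if and only if $Dx\circ Dy=0$; when $Dx=[X]$ and $Dy=[Y]$ are represented by transversely intersecting submanifolds, $Dx\circ Dy=[X\cap Y]$. Two preliminary observations will be used throughout. First, a closed orientable codimension-one submanifold of an orientable manifold has trivial normal bundle, hence vanishing self-intersection, so $D(x\smile x)=[X]\circ[X]=0$ and thus $x\smile x=0$ automatically; isotropy of a submodule therefore needs to be checked only on off-diagonal pairs. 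Second, the realization fact stated just before the theorem lets us represent any class of $H_{n-1}(M)$ by a smooth closed orientable \emph{connected} codimension-one submanifold, and a small perturbation makes any finite family of such submanifolds mutually transverse without changing their homology classes.

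With these tools the non-maximal correspondence is immediate. For part~(i), given an isotropic system $\set{X_i}$, the classes $[X_i]$ are independent, so the $x_i=D^{-1}[X_i]$ are a basis of $H=\langle x_1,\dots,x_k\rangle$; bilinearity of $\smile$ together with $x_i\smile x_j=D^{-1}[X_i\cap X_j]=0$ for $i\ne j$ (and the self-intersection remark for $i=j$) shows that $H$ is isotropic. Conversely, for part~(ii), given a basis $\set{x_i}$ of an isotropic subgroup $H$, I realize each $Dx_i$ by a connected submanifold $X_i$ and perturb to transversality; since $x_i\smile x_j=0$ we get $[X_i\cap X_j]=Dx_i\circ Dx_j=0$, and independence of the $x_i$ gives independence of the $[X_i]$, so $\set{X_i}$ is an isotropic system.

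The main work is the maximality clause, and this is where I expect the real difficulty. In the direction from subgroups to systems it is clean: if $H$ is a maximal isotropic subgroup but the realized system $\set{X_i}$ could be enlarged by a submanifold $Y$, then $y=D^{-1}[Y]$ would satisfy $y\smile x_i=0$ and, by independence of $[Y]$ from the $[X_i]$, would lie outside $H\otimes\QSet\supseteq H$; hence $\langle H,y\rangle$ would be a strictly larger isotropic subgroup, a contradiction. The reverse direction is subtler, because extending $\set{X_i}$ requires a \emph{homologically independent} new submanifold, so only an isotropic element $y$ with $y\notin H\otimes\QSet$ obstructs maximality of the system, whereas an enlargement of $H$ inside its rational saturation raises neither the rank of $H$ nor the cardinality of the system. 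The clean way around this is to pass to $\QSet$: a maximal isotropic system is exactly one that cannot be extended by an independent submanifold, which by the realization fact is equivalent to $H\otimes\QSet$ being a maximal isotropic subspace of $H^1(M;\QSet)$, and Lemma~\ref{lem:h=h_Q} (through Lemma~\ref{lem:h(G)=h(L)}) then matches maximality between $H\otimes\QSet$ and a maximal isotropic subgroup of the same rank. The genuine obstacle is therefore the interplay of maximality with torsion in $H^2(M;\ZSet)$ and with non-saturated subgroups; the rational reduction of Lemma~\ref{lem:h=h_Q} is precisely what absorbs this, matching the \emph{rank} of a maximal isotropic subgroup to the \emph{cardinality} of a maximal isotropic system.

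Finally, the two displayed ``in particular'' assertions follow formally. Every maximal isotropic system of cardinality $k$ yields, by part~(i), a maximal isotropic subgroup of rank $k$, so its cardinality lies in $\h(M;\ZSet)$; conversely, every $k\in\h(M;\ZSet)$ is the rank of a maximal isotropic subgroup, which by part~(ii) is realized by a maximal isotropic system of $k$ submanifolds. Hence $\h(M;\ZSet)$ is exactly the set of cardinalities of maximal isotropic systems, and $h(M;\ZSet)=\max\h(M;\ZSet)$ is the largest such cardinality.
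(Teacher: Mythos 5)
Your proof is correct and follows essentially the same route as the paper's: transfer everything through the Poincar\'e duality isomorphism to the intersection pairing $\circ$ on $H_{n-1}(M)$, realize the dual classes by connected transversely intersecting codimension-one submanifolds, and match homological independence of the $[X_i]$ with linear independence of the $x_i$. The one genuine difference is at the maximality clause, and there your version is more careful than the paper's own proof, which disposes of maximality in a single sentence (``expanding it would, by duality, expand $H$''). That one-liner is adequate in the direction maximal subgroup $\Rightarrow$ maximal system (your ``clean'' direction), but in the converse direction it silently ignores isotropic enlargements of $H=\langle D^{-1}[X_i]\rangle$ of the same rank: for instance, in $M=S^1\times S^2$ a connected surface $X$ realizing twice the generator of $H_2(M)\cong\ZSet$ is by itself a maximal isotropic system, yet $\langle D^{-1}[X]\rangle$ is an index-two subgroup of $H^1(M;\ZSet)$ and hence not a maximal isotropic subgroup. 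You isolate exactly this saturation/torsion issue, observe correctly that maximality of the system is equivalent to maximality of the subspace $H\otimes\QSet$ (clearing denominators and killing the torsion of $H^2(M;\ZSet)$), and then invoke Lemma~\ref{lem:h(G)=h(L)}\,(ii) to produce a maximal isotropic subgroup of the same rank. This proves the two displayed ``in particular'' assertions rigorously, at the price of reading the maximality clause of item (i) as yielding a maximal subgroup with the same rational span rather than literally $\langle D^{-1}[X_i]\rangle$ itself---a correction the theorem in fact needs. Your explicit remark that $x\smile x=0$ holds automatically (trivial normal bundle of an orientable codimension-one submanifold in an orientable manifold) likewise supplies the diagonal term that the paper's terse system-to-subgroup sentence leaves implicit.
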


\begin{proof}
Consider an isotropic 
subgroup $H\subset H^1(M;\ZSet)$, $\rk H=k$. 
Since $H^1(M;\ZSet)$ is torsion-free, it has a basis,
$H=\left\langle u_1,\dots,u_k\right\rangle$. 
The cup product 
$$
{\smile}\colon H^1(M;\ZSet)\times H^1(M;\ZSet)\to H^2(M;\ZSet)
$$ 
is dual to the homology classes intersection map
$$
{\circ}\colon  H_{n-1}(M)\times H_{n-1}(M)\to H_{n-2}(M);
$$
namely,  
$$
D(u_i\smile u_j)=Du_i\circ Du_j,
$$
where $D$ is a Poincar\'e duality map. 

Realize the cycles $Du_i\in H_{n-1}(M)$ by suitable submanifolds $X_i\subset M$, $Du_i=[X_i]$, choosing them to intersect transversely. Then
\begin{align*}
\pm[X_i\cap X_j]=[X_i]\circ[X_j]=Du_i\circ Du_j=D(u_i\smile u_j),
\end{align*}
where the sign depends on the choice of orientation in $X_i$ and $X_j$.
Since $H$ is isotropic, all $u_i\smile u_j=0$; 
thus
$[X_i\cap X_j]=0$ for any $i\ne j$. Since $u_i$ are independent, so are $[X_i]$. 
If $H$ is maximal then so is this system, because expanding it would, by duality, expand $H$.

Similarly, given a (maximal) system of $k$ such submanifolds $X_i\subset M$, the group $H=\langle D^{-1}[X_i]\rangle\subseteq H^1(M;\ZSet)$ is a (maximal) isotropic subgroup, $\rk H=k$.
\end{proof}

Examples~\ref{ex:Heisenberg} and~\ref{ex:KT} below show that the homological interpretation of the non-intersection requirement~\ref{eq:[XY]=0} is important for Theorem~\ref{theor:h(M)_geometry}:
some manifolds have fewer non-intersecting submanifolds, $X_i\cap X_j=\emptyset$, with the properties listed in Definition~\ref{def:isotropic-system}, than homologically non-intersecting such submanifolds, $[X_i\cap X_j]=0$. For discussion of systems with $X_i\cap X_j=\emptyset$, see Section~\ref{sec_corank}.

\subsection {Properties and examples}%

Recall that the Betti number $b_k(M;R)=\rk H_k(M;R)$; by definition, $b_k(M)=b_k(M;\ZSet)$. By the  universal coefficient theorem, 
if $R$ is a field with $\har R=0$, then $b_k(M;R)=b_k(M)$. 
Since $H^k(M;R)=H_k(M;R)=0$ for $k>\dim M$, the following statements apply to $S^1$ and a point~$*$.

\begin{lemma}\label{lem:h(M)=0}
Let $M$ be a smooth closed orientable manifold; $R=\ZSet$ or $R$ be a field. Then
\begin{align}
1\le h(M;R)\le b_1(M;R),
\label{eq:1<h<b}
\end{align} 
except that
\begin{align} 
h(M;R)=0
\label{eq:h=0}
\end{align} 
iff any of the following conditions holds:
\begin{itemize}
\renewcommand\labelitemi{--}
\itemsep0em
\item %
    $b_1(M;R)=0$, or
\item %
    $b_1(M;R)=1$, 
    $\har R=2$, and the cup product ${\smile}\not\equiv 0$.
\end{itemize}
\end{lemma}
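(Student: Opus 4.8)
The plan is to deduce everything from the abstract criterion in Lemma~\ref{lem:h(L)=0}, applied to the skew-symmetric cup product $\phi={\smile}$ on $L=H^1(M;R)$ with values in $V=H^2(M;R)$. The preliminary point I would settle first is that $L$ is a free $R$-module of rank $b_1(M;R)$: for a field this is immediate from the universal coefficient theorem, which gives $H^1(M;R)\cong\Hom_R(H_1(M;R),R)$ and hence $\rk L=\rk H_1(M;R)=b_1(M;R)$; for $R=\ZSet$ the same theorem gives $H^1(M;\ZSet)\cong\Hom(H_1(M;\ZSet),\ZSet)$, the $\mathrm{Ext}$-term $\mathrm{Ext}(H_0(M;\ZSet),\ZSet)$ vanishing because $H_0$ is free, so $L$ is free of rank $b_1(M)$. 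In particular $L$ is torsion-free, so the condition $\rk L=1$ is equivalent to $L\cong R$.

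The upper bound is then the easy half: every isotropic submodule $H\subseteq L$ satisfies $\rk H\le\rk L=b_1(M;R)$, whence $h(M;R)=\max\h(\smile)\le b_1(M;R)$. This holds unconditionally, including in the degenerate cases $b_1(M;R)=0,1$.

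For the lower bound and the exceptional list, I would feed the two cases of Lemma~\ref{lem:h(L)=0} through the identification $L=H^1(M;R)$. The lemma says $h(M;R)=0$, i.e.\ $\h(\smile)=\set{0}$, exactly when either $L=0$, or $L\cong R$ together with $\har R=2$ and ${\smile}\not\equiv0$. By the first paragraph, $L=0$ translates to $b_1(M;R)=0$ and $L\cong R$ translates to $b_1(M;R)=1$, which are precisely the two bullet conditions of the statement; note that for $R=\ZSet$ the second case can never occur, consistently, since $\har\ZSet=0$. Finally, when neither exceptional condition holds, Lemma~\ref{lem:h(L)=0} gives $\h(\smile)\ne\set{0}$, so by~\eqref{eq:0notinHorH=0} we have $0\notin\h(\smile)$; since $\h(\smile)$ is nonempty (the zero submodule lies in some maximal isotropic one), we conclude $h(M;R)\ge1$, which combined with the upper bound yields~\eqref{eq:1<h<b}.

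I expect the only genuinely non-formal ingredient to be the structural fact $\rk H^1(M;R)=b_1(M;R)$ together with the freeness of $H^1(M;\ZSet)$; everything else is bookkeeping that transports the module-theoretic dichotomy of Lemma~\ref{lem:h(L)=0} onto manifolds. The degenerate low-dimensional cases ($S^1$, where ${\smile}\equiv0$, and the point~$*$) deserve a line to confirm that they fall into the non-exceptional regime and the first bullet respectively, but they present no real obstacle.
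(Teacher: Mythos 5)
Your proposal is correct and follows essentially the same route as the paper: bound $h(M;R)$ above by $\rk H^1(M;R)=b_1(M;R)$, and obtain the $h(M;R)=0$ dichotomy by applying the skew-symmetric criterion of Lemma~\ref{lem:h(L)=0} to the cup product on $L=H^1(M;R)$. The only (harmless) deviation is that you get $\rk H^1(M;R)=b_1(M;R)$ and the freeness of $H^1(M;\ZSet)$ from the universal coefficient theorem, where the paper instead uses Poincar\'e duality for $R=\ZSet$ and $H^1(M;F)\cong H_1(M;F)$ for fields; your variant even makes explicit the torsion-freeness needed to translate $L=0$ and $L\cong R$ into $b_1(M;R)=0$ and $b_1(M;R)=1$, a point the paper leaves implicit.
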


Theorem~\ref{theor:h,b} below states that this lemma gives the only relation between $h(M;R)$ and $b_1(M;R)$.

\begin{proof}
By definition, we have $h(M;R)\le\rk H^1(M;R)$. 
For $\ZSet$, by Poincar\'e duality, $H^1(M;\ZSet)\cong H_{n-1}(M)$, so $\rk H^1(M;\ZSet)=b_{n-1}(M)=b_1(M)$.
For a field $F$,
$H^1(M;F)\cong H_1(M;F)$, so $\dim H^1(M;F)=\dim H_1(M;F)=b_1(M;F)$.
Since the cup product is skew-symmetric,~\eqref{eq:h=0} is given by Lemma~\ref{lem:h(L)=0}.
\end{proof}

\begin{example}\label{ex:RP3}
Consider $M=\RSet P^3$; it is orientable. Its cohomology ring is 
\begin{align*}
H^*(\RSet P^3;\ZSet_2)\cong\frac{\ZSet_2[\al]}{(\al^4)},
\end{align*}
where $|\al|=1$. 
Thus each $H^i(\RSet P^3;\ZSet_2)$
is a free $\ZSet_2$-module 
with generator $\al^i$,
i.e.,
$H^i(\RSet P^3;\ZSet_2)=\ZSet_2$.
We have $b_i(\RSet P^3;\ZSet_2)=1$, and for $\al\in H^1(\RSet P^3;\ZSet_2)$ it holds $\al\smile\al\ne0$, i.e., $h(\RSet P^3;{\ZSet_2})=0$.
\end{example}

Obviously, $h(M;R)=b_1(M;R)$ iff  ${\smile}\equiv 0$.
Since $\ker{\smile}\subseteq H^1(M;R)$ is an isotropic submodule, $h(M;R)\ge\dim\ker{\smile}$.
There are, though, better estimates:

\begin{proposition}\label{prop:h-bounds}
Let $M$ be a smooth closed orientable manifold
and %
$k=\dim\ker{\smile}$. For $R=\ZSet$ or $R$ being a field, with the exception specified below, we have:
\begin{enumerate}
\item It holds
\begin{align}
\frac{b_1(M;R)+k\,b_2(M;R)}{b_2(M;R)+1}\le h(M;R)\le\frac{b_1(M;R)\,b_2(M;R)+k}{b_2(M;R)+1};
\label{eq:b+bk<h<bb+k}
\end{align}
in particular, if\/ $b_2(M;R)=1$, then
\begin{align}
h(M;R) = \frac 12(b_1(M;R) + k).
\label{eq:h=1/2}
\end{align}
\item
If\/ $\smile$ is surjective, then
\begin{align}
h(M;R)\le k+\frac 12+\sqrt{\left(b_1(M;R)-k-\frac 12\right)^2-2\,b_2(M;R)}.
\label{h=sqrt}
\end{align}
\end{enumerate}
As an exception, if\/ 
\begin{align}
\left\{
\begin{array}l
\har R=2,\\
b_1(M;R)=1,\\
k=0,
\end{array}
\right.
\label{eq:exception}
\end{align}
then $h(M;R)=0$ and
of\/~\eqref{eq:b+bk<h<bb+k}--\eqref{h=sqrt}, only the upper bound in~\eqref{eq:b+bk<h<bb+k} holds.
\end{proposition}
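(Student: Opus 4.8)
The plan is to recast the statement as two inequalities for a single nondegenerate skew-symmetric form and to exploit two linear maps canonically attached to a maximal isotropic subspace. Write $\phi={\smile}\colon L\times L\to V$ with $L=H^1(M;R)$, $V=H^2(M;R)$, $b_1=\dim L$, $b_2=\dim V$, $k=\dim\ker\phi$. First I would reduce the integral case to a field: by Lemma~\ref{lem:h=h_Q} we have $h(M;\ZSet)=h(M;\QSet)$, while $b_i(M;\ZSet)=b_i(M;\QSet)$ and $\har\QSet=0$, so $R=\ZSet$ follows from the field case. For a field $R$, since every maximal isotropic submodule contains $\ker\phi$, passing to $\bar L=L/\ker\phi$ produces an induced \emph{nondegenerate} skew-symmetric form $\bar\phi\colon\bar L\times\bar L\to V$ with $\dim\bar L=n_1:=b_1-k$, together with a rank-preserving bijection $H\mapsto\bar H=H/\ker\phi$ between maximal isotropic submodules of $L$ and of $\bar L$; here $\dim\bar H=\dim H-k$. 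Setting $h'=h(M;R)-k$, the whole proposition becomes a pair of bounds on $h'$ in terms of $n_1$ and $b_2$.

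For the two bounds in~\eqref{eq:b+bk<h<bb+k} I would use the two natural maps. For the upper bound, isotropy makes $\bar\phi(x,\cdot)$ vanish on $\bar H$ for $x\in\bar H$, so the assignment $x\mapsto\bar\phi(x,\cdot)$ descends to a map $\bar H\to\Hom(\bar L/\bar H,V)$ that is injective by nondegeneracy; hence $h'\le(n_1-h')b_2$, which rearranges to $h'\le n_1 b_2/(b_2+1)$ and, after adding $k$, to the upper bound in~\eqref{eq:b+bk<h<bb+k}. This step is valid in every characteristic. For the lower bound I would take $\bar H$ maximal and consider $\bar\Phi\colon\bar L\to\Hom(\bar H,V)$, $x\mapsto\bar\phi(x,\cdot)|_{\bar H}$, whose kernel is $\bar H^{\perp}=\set{x\mid\bar\phi(x,\bar H)=0}$. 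Maximality forces $\bar H^{\perp}=\bar H$ when $\har R\ne2$: then $\bar\phi(x,x)=0$ automatically, so for $x\in\bar H^{\perp}$ the submodule $\bar H+\langle x\rangle$ is isotropic, whence $x\in\bar H$. Thus $n_1-h'=\dim\im\bar\Phi\le\dim\Hom(\bar H,V)=h'b_2$, giving $h'\ge n_1/(b_2+1)$ and the lower bound. When $b_2=1$ the two bounds coincide at $\tfrac12(b_1+k)$, which is~\eqref{eq:h=1/2}.

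For the surjective case~\eqref{h=sqrt}, assuming $\har R\ne2$, the form $\bar\phi$ is alternating, hence a linear map $\wedge^2\bar L\to V$ vanishing on the image of $\wedge^2\bar H$. Surjectivity of $\smile$ then forces $b_2\le\dim\bigl(\wedge^2\bar L/\wedge^2\bar H\bigr)=\binom{n_1}{2}-\binom{h'}{2}$. This is the quadratic inequality $h'^2-h'\le n_1^2-n_1-2b_2$, whose relevant root gives $h'\le\tfrac12+\sqrt{(n_1-\tfrac12)^2-2b_2}$; substituting $n_1=b_1-k$ and $h=h'+k$ yields exactly~\eqref{h=sqrt}.

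The main obstacle is characteristic $2$, which is precisely where the exception sits. There $\bar\phi$ is merely symmetric, the squaring $x\mapsto\bar\phi(x,x)$ need not vanish, so $\bar H^{\perp}$ may strictly contain $\bar H$ and the $\wedge^2$ description of~\eqref{h=sqrt} must be replaced by a $\operatorname{Sym}^2$-type count; maximality now yields only that the induced quadratic form on $\bar H^{\perp}/\bar H$ is anisotropic. Both the lower bound in~\eqref{eq:b+bk<h<bb+k} and~\eqref{h=sqrt} can fail, the prototype being $\RSet P^3$ over $\ZSet_2$, where $b_1=1$, $k=0$, $\smile\not\equiv0$ and $h=0$ (cf.\ Example~\ref{ex:RP3} and Lemma~\ref{lem:h(M)=0}). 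In the exceptional regime~\eqref{eq:exception} I would verify the claim directly: $b_1=1$ and $k=0$ force $\bar L=R$ with $\bar\phi(x,x)\ne0$, so $h=0$; the upper bound $b_2/(b_2+1)\ge0$ still holds, while the lower bound $1/(b_2+1)>0$ and~\eqref{h=sqrt} do not. Controlling the anisotropic quadratic form in the characteristic-$2$ setting, and checking that the stated exception accounts for the failures, is the delicate part of the argument.
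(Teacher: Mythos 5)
Your proposal is correct and complete for $\har R\ne2$, and it actually does more than the paper's own proof, which is a reduction rather than a derivation: for $h(M;R)\ne0$ the paper cites Mel'nikova~\cite{Meln3} for the field case and Lemma~\ref{lem:h=h_Q} for $R=\ZSet$, and for $h(M;R)=0$ it invokes Lemma~\ref{lem:h(M)=0} to conclude that either $b_1(M;R)=0$ (all bounds trivially hold) or the exception~\eqref{eq:exception} occurs. You instead reconstruct the cited inequalities from scratch, and your three steps are sound: the quotient by $\ker\smile$ with the rank-preserving correspondence of maximal isotropic submodules; the injection $\bar H\hookrightarrow\Hom(\bar L/\bar H,V)$, which is valid in every characteristic and so fully proves the upper bound in~\eqref{eq:b+bk<h<bb+k} including the exceptional case; the identity $\bar H^{\perp}=\bar H$ for maximal $\bar H$ when the form is alternating, giving the lower bound and hence~\eqref{eq:h=1/2}; and the count $b_2\le\binom{n_1}{2}-\binom{h'}{2}$ for~\eqref{h=sqrt}. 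The $\ZSet\to\QSet$ reduction matches the paper's, with the same tacit gloss that over $\ZSet$ the kernel must be read modulo torsion in $H^2$, cf.\ Lemma~\ref{lem:h(G)=h(L)}.

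The one point you leave open --- that in characteristic $2$ the lower bound and~\eqref{h=sqrt} hold whenever~\eqref{eq:exception} fails --- is not a gap you could have closed: that part of the statement is false, and your anisotropic-quadratic-form analysis shows exactly why. Take $M=\RSet P^3\connsum\RSet P^3$, $R=\ZSet_2$. By Example~\ref{ex:RP3} and the paper's own Theorem~\ref{theor:conn_sum}, $h(M;\ZSet_2)=0$, while $b_1(M;\ZSet_2)=b_2(M;\ZSet_2)=2$ and $k=0$ (the squaring map $(a,b)\mapsto a\al_1^2+b\al_2^2$ is injective), and $\smile$ is surjective since $(\al_1+\al_2)\smile(\al_1+\al_2)=\al_1^2+\al_2^2$. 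The exception does not apply ($b_1=2$), yet the lower bound would force $h\ge2/3$ and the radicand in~\eqref{h=sqrt} is $(3/2)^2-4<0$. Even the case $h\ne0$ that the paper delegates to~\cite{Meln3} fails in characteristic $2$: for $\RSet P^3\connsum(S^1\times S^2)$ over $\ZSet_2$ one gets $h=1$, $b_1=b_2=2$, $k=1$, violating $h\ge4/3$. The root cause is the paper's Lemma~\ref{lem:h(L)=0}, whose proof writes $\varphi(l_i,l_i)=1$, tacitly assuming the two self-values cancel; when $\dim V\ge2$ they can be linearly independent, so $\varphi(l_1+l_2,l_1+l_2)\ne0$. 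Over $\ZSet_2$ your map $q(x)=\bar\phi(x,x)$ is additive with kernel exactly $\bar H$ inside $\bar H^{\perp}$, so maximality yields only $\dim\bar H^{\perp}\le h'+b_2$ and the weaker bound $h'\ge(n_1-b_2)/(b_2+1)$, which is attained in the first example. The fix is to state your proof with the hypothesis $\har R\ne2$ (or $R=\ZSet$) for the lower bound and for~\eqref{h=sqrt}; only the upper bound in~\eqref{eq:b+bk<h<bb+k} survives in characteristic~$2$ without amendment.
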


\begin{proof}
If $h(M;R)\ne0$,
for a field
this has been shown in~\cite{Meln3}; for $\ZSet$ it follows from Lemma~\ref{lem:h=h_Q}. 

If $h(M;R)=0$, then by Lemma~\ref{lem:h(M)=0} either $b_1(M;R)=0$ or~\eqref{eq:exception} holds.
In the former case,~\eqref{eq:b+bk<h<bb+k}--\eqref{h=sqrt} happen to hold; in~\eqref{h=sqrt}, we have $b_2(M;R)=0$.
In the latter case,~\eqref{eq:h=1/2} and the lower bound in~\eqref{eq:b+bk<h<bb+k} do not hold, while in~\eqref{h=sqrt} we have $b_2(M;R)=1$ and the square root does not exist.
\end{proof}

The exception is illustrated by Example~\ref{ex:RP3}. In fact,~\eqref{eq:h=1/2} and the lower bound in~\eqref{eq:b+bk<h<bb+k} would not need an exception if we took the floor function of the corresponding expressions, which in all other cases except~\eqref{eq:exception} happen to be integer anyway.

\begin{example}
For a closed orientable surface of genus $g$, \eqref{eq:h=1/2} gives $h(M^2_g;R)=g$;
for $n$-torus, $n\ge2$,~\eqref{h=sqrt} gives $h(T^n;R)=1$. 
Both cases do not fall under exception~\eqref{eq:exception}, since $b_1(M^2_g)=2g\ne1$ and $b_1(T^n;R)=n\ne1$.
\end{example}

\begin{example}
\label{ex:H(T)}
Thus, $\h(T^n;R)=\set{1}$.
Indeed,~\eqref{eq:0notinHorH=0} gives $$h(M;R)=1\quad\text{iff}\quad\cal H(M;R)=\set{1}.$$ 
\end{example}

The following example shows a non-singleton $\h(M;R)$:

\begin{example}\label{ex:M22xS1}
$\h(M^2_2\times S^1;\ZSet)=\set{1,2}$. This is seen in \figurename~\ref{fig:M22xS1}, but can also be formally proved by Theorem~\ref{theor:dir_prod} below.
\end{example}

\begin{figure}[ht]
\centering
    \includegraphics[angle=-90,width=10cm]{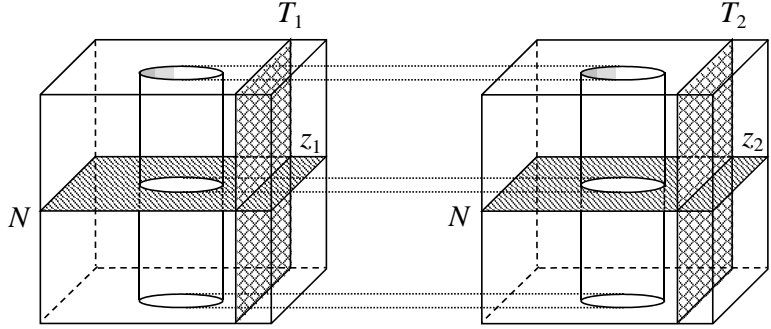}
    \caption{\label{fig:M22xS1}Maximal systems of non-intersecting submanifolds of different cardinality: $\h(M^2_2\times S^1;\ZSet)=\set{1,2}$. 
    The double torus $M^2_2=T^2\connsum T^2$, labeled by $N$, is shown as two horizontal squares, sides of each one being pairwise identified, glued together by a removed circle in the center; $S^1$ is shown as vertical lines.
    The two maximal systems of homologically non-intersecting, homologically independent submanifolds are $\set{N}$, a double torus, and $\set{T_1,T_2}$, two tori $z_i\times S^1$, 
    $z_1$ and $z_2$ being two homologically independent cycles in the $M^2_2$.}
\end{figure}

\section{\label{sec_conn-sum}Isotropy index of the connected sum of manifolds}

For sets, we denote $A+B=\set{a+b\mid a\in A, b\in B}$. 

\begin{lemma}\label{lem:sum}
Let\/ $L_i$, $V_i$, $i=1,2$, be finite-dimensional vector spaces over a filed $F$ and\/
$\varphi_i\colon L_i\times L_i\to V_i$ be bilinear skew-symmetric maps. 
Denote
\begin{align*}
L&=L_1\oplus L_2,\\
V&=V_1\oplus V_2, 
\end{align*}
and let $\varphi\colon L\times L\to V$ be a bilinear skew-symmetric map such that
\begin{align*}
\varphi|_{L_i\times L_i}&=\varphi_i,\\
\varphi|_{L_1\times L_2}&=0;
\end{align*}
i.e., $\varphi$ is
defined as component-wise sum of $\varphi_i$:
\begin{align}
\varphi(x,y)=\underbrace{\varphi_1(x_1,y_1)}_{\in V_1}+\underbrace{\varphi_2(x_2,y_2)}_{\in V_2},
\label{phi=phi+phi}
\end{align}
where $x_i,y_i\in L_i$ are projections.
Then:
\begin{enumerate}
\item
A subspace $H\subseteq L$ is maximal isotropic iff\/
\begin{align}
H=H_1\oplus H_2, 
\label{eq:H=H+H}
\end{align}
where $H_i\subseteq L_i$ are maximal isotropic under $\varphi_i$.
\item
The set of dimensions of maximal isotropic subspaces
$$
\h({\phi})=\h({\phi}_1)+\h({\phi}_2).
$$
\item
The isotropy index $$h({\phi})=h({\phi}_1)+h({\phi}_2).$$
\end{enumerate}
\end{lemma}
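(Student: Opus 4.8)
The plan is to prove statement (1), from which (2) and (3) follow immediately by taking dimensions and maxima. The key structural fact to exploit is the block-diagonal form~\eqref{phi=phi+phi}: because $\varphi$ restricts to $\varphi_i$ on each factor and vanishes on cross-terms $L_1\times L_2$, isotropy decouples across the direct summands. First I would prove the easy direction: if $H_i\subseteq L_i$ are isotropic under $\varphi_i$, then $H=H_1\oplus H_2$ is isotropic under $\varphi$. Indeed, for $x=x_1+x_2$ and $y=y_1+y_2$ with $x_i,y_i\in H_i$, formula~\eqref{phi=phi+phi} gives $\varphi(x,y)=\varphi_1(x_1,y_1)+\varphi_2(x_2,y_2)=0+0=0$. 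This handles the ``if'' part of the isotropy claim; maximality of this $H$ I defer to the converse.

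Next I would prove the converse direction: every maximal isotropic $H\subseteq L$ has the product form~\eqref{eq:H=H+H} with each $H_i$ maximal isotropic. The natural approach is to set $H_i=H\cap L_i$ (viewing $L_i\subseteq L$ as the canonical summands) and show first that each $H_i$ is isotropic under $\varphi_i$ (immediate, since $\varphi|_{L_i\times L_i}=\varphi_i$ and $H_i\subseteq H$), and then that $H=H_1\oplus H_2$. The inclusion $H_1\oplus H_2\subseteq H$ is clear. For the reverse inclusion $H\subseteq H_1\oplus H_2$, the argument I expect to need is a maximality/enlargement argument rather than a direct decomposition, because an arbitrary maximal isotropic $H$ need not a priori split along the summands. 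Concretely, I would argue that if $H$ is isotropic, then the projections $\pi_i(H)\subseteq L_i$ are isotropic under $\varphi_i$ (using~\eqref{phi=phi+phi} and skew-symmetry), so $\pi_1(H)\oplus\pi_2(H)$ is isotropic in $L$ by the easy direction and contains $H$; by maximality of $H$ this forces $H=\pi_1(H)\oplus\pi_2(H)$, whence $\pi_i(H)=H_i$ is maximal isotropic in $L_i$ (any strictly larger isotropic $H_i'\supseteq H_i$ would give a strictly larger isotropic $H_i'\oplus H_j\supseteq H$, contradicting maximality of $H$).

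The main obstacle, and the step requiring care, is the claim that the projections $\pi_i(H)$ are themselves isotropic. This is where skew-symmetry is essential and where the easy direction alone does not suffice: given $x_1,y_1\in\pi_1(H)$, I must produce lifts $x=x_1+x_2,\ y=y_1+y_2\in H$ and control the cross-term. The computation $\varphi(x,y)=\varphi_1(x_1,y_1)+\varphi_2(x_2,y_2)$ shows $\varphi_1(x_1,y_1)\in V_1$ equals the $V_1$-component of $\varphi(x,y)=0$, so $\varphi_1(x_1,y_1)=0$ directly, giving isotropy of $\pi_1(H)$ without needing skew-symmetry here after all; the role of skew-symmetry is instead to guarantee these are honest isotropic subspaces compatible with the maximal-isotropic enlargement lemma invoked above. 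Once statement (1) is established, statement (2) follows by reading off $\dim H=\dim H_1+\dim H_2$ over all such pairs, giving $\h(\phi)=\h(\phi_1)+\h(\phi_2)$ as an element-wise sum of sets, and statement (3) follows by taking the maximum, $h(\phi)=\max(\h(\phi_1)+\h(\phi_2))=\max\h(\phi_1)+\max\h(\phi_2)=h(\phi_1)+h(\phi_2)$.
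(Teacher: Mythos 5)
Your treatment of the forward direction (a maximal isotropic $H\subseteq L$ decomposes) is correct and is essentially the paper's argument: both proofs note that $\varphi(x,y)=0$ forces $\varphi_1(x_1,y_1)=-\varphi_2(x_2,y_2)\in V_1\cap V_2=0$, so the projections $\pi_i(H)$ are isotropic, and then play maximality of $H$ against the isotropic subspace $\pi_1(H)\oplus\pi_2(H)\supseteq H$. (The paper phrases this by first enlarging $p_i(H)$ to maximal isotropic $H_i\subseteq L_i$ and concluding $H=H_1\oplus H_2$; your variant --- deducing $H=\pi_1(H)\oplus\pi_2(H)$ first and then maximality of the factors --- is an equivalent reshuffling. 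Your aside about the ``role of skew-symmetry'' is off the mark but harmless: nothing beyond finite-dimensionality is needed for the existence of maximal isotropic extensions.)

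The genuine gap is the other direction of statement (1). You write ``maximality of this $H$ I defer to the converse,'' but your converse only proves that maximal isotropic subspaces of $L$ have product form; it never proves that $H_1\oplus H_2$ is \emph{maximal} isotropic whenever the $H_i$ are maximal isotropic in $L_i$. These are logically distinct implications, and the missing one is precisely what the inclusion $\h(\phi_1)+\h(\phi_2)\subseteq\h(\phi)$ in statement (2) rests on: without it you obtain only $\h(\phi)\subseteq\h(\phi_1)+\h(\phi_2)$. (Statement (3) would survive regardless, since mere isotropy of $H_1\oplus H_2$ gives $h(\phi)\ge h(\phi_1)+h(\phi_2)$.) The paper closes this with a short direct argument: given $x=x_1+x_2\in L\setminus H$ with, say, $x_1\notin H_1$, maximality of $H_1$ yields $y_1\in H_1$ with $\varphi_1(x_1,y_1)\ne0$, and then by \eqref{phi=phi+phi}, $\varphi(x,y_1)=\varphi_1(x_1,y_1)+0\ne0$, so no isotropic subspace strictly contains $H$. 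Alternatively, you could close it using your own forward direction: extend $H=H_1\oplus H_2$ to a maximal isotropic $\wH$; then each $\pi_i(\wH)$ is isotropic and contains $H_i$, hence equals $H_i$ by maximality of $H_i$ in $L_i$, so $\wH=H_1\oplus H_2=H$. Either way, the step must actually be written out; as it stands, your final paragraph invokes both directions of (1) while only one has been proved.
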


Note that these conclusions do not necessarily hold for isotropic subspaces that are not maximal. 
For example, each 1-dimensional subspace $\langle x\rangle$, $x\in L\setminus(L_1\cup L_2)$, is isotropic, \textforidiot{since $\har F\ne 2$} but~\eqref{eq:H=H+H} does not hold for it.

\begin{proof}
By~\eqref{phi=phi+phi}, if $H_i\in L_i$ are isotropic, then $H=H_1\oplus H_2$ is isotropic.

($\Rightarrow$) 
Let $H\subseteq L$ be a maximal isotropic subspace. 
Consider $x,y\in H$. 
By~\eqref{phi=phi+phi}, $\varphi(x_1,y_1)=-\varphi(x_2,y_2) \in V_1\cap V_2=0$, i.e., both projections $p_i(H)\subseteq L_i$ are isotropic. 
Let $H_i \supseteq p_i(H)$ be maximal isotropic subspaces of $L_i$. Then $H'=H_1\oplus H_2$ is isotropic, and since $H\subseteq H'$ is maximal, $H=H'$.

($\Leftarrow$) 
Conversely, let $H_i \subseteq L_i$ be maximal isotropic subspaces; then
$H=H_1\oplus H_2$ is isotropic. 
Consider $x=x_1+x_2 \in L\setminus H$, i.e., say, $x_1\notin H_1$. 
Since $H_1$ is maximal isotropic in $L_1$, there exists $y=y_1\in H_1\subseteq H$ such that $\varphi(x_1,y_1)\ne 0$. 
By~\eqref{phi=phi+phi}, $\varphi(x,y)=\varphi(x_1,y_1)+0\ne 0$. Thus $H$ is maximal. 
\end{proof}

\begin{theorem}\label{theor:conn_sum}
Let $M_1$, $M_2$ be connected closed orientable manifolds with $\dim M_i\ge 2$, and $R$ be a field or $R=\ZSet$.
Then for the connected sum $M_1\connsum M_2$:
\begin{enumerate}
\item
A submodule $H\subseteq H^1(M_1\connsum M_2;R)$ 
is maximal isotropic iff $$H=H_1\oplus H_2,$$ where $H_i\subseteq H^1(M_i;R)$ are maximal isotropic submodules.
\item
The set of ranks of maximal isotropic submodules
$$
\h(M_1\connsum M_2;R)=\h(M_1;R)+\h(M_2;R).
$$
\item
The isotropy index of the connected sum
$$
h(M_1 \connsum M_2;R)=h(M_1;R)+h(M_2;R).
$$
\end{enumerate}
\end{theorem}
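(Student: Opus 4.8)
The plan is to reduce Theorem~\ref{theor:conn_sum} to the purely algebraic statement of Lemma~\ref{lem:sum} by identifying the cohomological data of $M_1\connsum M_2$ with an orthogonal direct sum of the data of the summands, and then, for the integral case $R=\ZSet$, to pass through $\QSet$ via Lemma~\ref{lem:h=h_Q}. The key input is the well-known behavior of cohomology under connected sum: for connected closed orientable $n$-manifolds with $\dim M_i\ge2$, one has
\begin{align*}
H^1(M_1\connsum M_2;R)&\cong H^1(M_1;R)\oplus H^1(M_2;R),\\
H^2(M_1\connsum M_2;R)&\cong H^2(M_1;R)\oplus H^2(M_2;R),
\end{align*}
and, crucially, under this identification the cup product respects the splitting: classes pulled back from different summands have vanishing product. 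So first I would establish this ``orthogonal splitting'' of the cup product.

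The cleanest way to see the splitting is via the collapse maps. Writing $M_1\connsum M_2$ as $M_1\setminus B_1$ glued to $M_2\setminus B_2$ along an $(n-1)$-sphere, one has collapse maps $q_i\colon M_1\connsum M_2\to M_i$ (pinching the other summand to a point) which induce isomorphisms on $H^1$ and $H^2$ in degrees below $n$ (here $\dim M_i\ge2$ guarantees $1,2<n$ unless $n=2$, the surface case, which must be checked separately but where $H^2$ is one-dimensional and the statement is classical). The point is that a class in the image of $q_1^*$ is supported, up to the central sphere, in the $M_1$-part, and a class in the image of $q_2^*$ in the $M_2$-part; since these ``supports'' meet only in a collar of the separating sphere, the cup product of an $M_1$-class with an $M_2$-class lands in $H^2$ supported on that collar, which vanishes. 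Making this rigorous is most comfortably done with the Mayer--Vietoris sequence for the decomposition $M_1\connsum M_2 = (M_1\setminus B_1)\cup(M_2\setminus B_2)$, using that $M_i\setminus B_i$ is homotopy equivalent to $M_i$ minus a point and that the cup product of classes restricting to zero on complementary open sets vanishes. I expect this geometric/homological identification to be the main obstacle: it is standard but the dimension bookkeeping (especially the boundary surface case $n=2$) and the verification of cup-product orthogonality must be handled with care.

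Once the orthogonal splitting is in hand, part~(1) follows immediately: over a field $F$, set $L_i=H^1(M_i;F)$, $V_i=H^2(M_i;F)$, and $\varphi_i={\smile}$; the splitting says exactly that $\varphi={\smile}$ on $H^1(M_1\connsum M_2;F)$ satisfies the hypotheses $\varphi|_{L_i\times L_i}=\varphi_i$ and $\varphi|_{L_1\times L_2}=0$ of Lemma~\ref{lem:sum}. Hence $H\subseteq H^1(M_1\connsum M_2;F)$ is maximal isotropic iff $H=H_1\oplus H_2$ with $H_i$ maximal isotropic, and parts~(2),~(3) are the corresponding statements $\h({\smile})=\h(\varphi_1)+\h(\varphi_2)$ and $h({\smile})=h(\varphi_1)+h(\varphi_2)$ of Lemma~\ref{lem:sum}. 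For $R=\ZSet$, I would apply Lemma~\ref{lem:h=h_Q} to each of $M_1\connsum M_2$, $M_1$, and $M_2$ to convert the statements into their $\QSet$-counterparts, which are the field case just proved; since $\QSet\otimes(A\oplus B)=(\QSet\otimes A)\oplus(\QSet\otimes B)$ and the induced cup product still splits orthogonally, the equalities $\h(M_1\connsum M_2;\ZSet)=\h(M_1;\ZSet)+\h(M_2;\ZSet)$ and $h(M_1\connsum M_2;\ZSet)=h(M_1;\ZSet)+h(M_2;\ZSet)$ follow. (The part~(1) statement for $\ZSet$ requires the splitting over $\ZSet$ directly, which the Mayer--Vietoris argument also supplies, since $H^2(M_i;\ZSet)$ are the relevant integral groups.)
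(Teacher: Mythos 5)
Your proposal is, in substance, the paper's own proof: the Mayer--Vietoris splitting $H^1(M_1\connsum M_2;R)\cong H^1(M_1;R)\oplus H^1(M_2;R)$ with component-wise cup product, Lemma~\ref{lem:sum} for the field case, and Lemma~\ref{lem:h(G)=h(L)} (equivalently Lemma~\ref{lem:h=h_Q}) to transfer to $R=\ZSet$; your collapse-map discussion merely elaborates the orthogonality that the paper asserts in one line.

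The one place where you part ways with the paper is the caveat you raise about $n=2$ and then wave off as ``classical,'' and it deserves more than a wave --- it is a genuine gap, one the paper itself shares. When $n=2$ the target does not split: $H^2(M_1\connsum M_2;R)\cong R$, whereas $V_1\oplus V_2\cong R^2$, so the hypothesis $V=V_1\oplus V_2$ of Lemma~\ref{lem:sum} fails; $\varphi_1$ and $\varphi_2$ take values in the \emph{same} one-dimensional module and can cancel. Consequently part (1) of the theorem is actually false for surfaces: in $T^2\connsum T^2=M^2_2$, with symplectic bases $a_i,b_i$ of the two tori, the submodule
\begin{align*}
H=\langle a_1+a_2,\; b_1-b_2\rangle
\end{align*}
is isotropic of rank $2=h(M^2_2;R)$, hence maximal, yet $H\cap H^1(T^2;R)=0$ for both summands, so $H$ is not of the form $H_1\oplus H_2$. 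Parts (2) and (3) do survive for $n=2$, because every maximal isotropic submodule of the genus-$g$ intersection form has rank $g$ (Example~\ref{ex:H(M^2_g)}), but that has to be argued separately; for $n\ge3$, where Mayer--Vietoris does give $H^2(M_1\connsum M_2;R)\cong V_1\oplus V_2$, your argument (and the paper's) is complete. A final small point: for part (1) over $\ZSet$, the equality of rank sets in Lemma~\ref{lem:h=h_Q} is not by itself enough; you need the bijective correspondence $H\mapsto\QSet\otimes H$ between maximal isotropic subgroups and subspaces from items (i)--(ii) of Lemma~\ref{lem:h(G)=h(L)}, or --- simpler, and closest to your parenthetical remark --- the observation that the proof of Lemma~\ref{lem:sum} uses only $V_1\cap V_2=0$ and hence works verbatim over $\ZSet$ once the integral splitting is in hand; as written, Lemma~\ref{lem:sum} is stated only for vector spaces, so this one-line justification should be made explicit.
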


\begin{proof}
Let $R$ be a field. 
Denote $L=H^1(M_1\connsum M_2;R)$ and $L_i=H^1(M_i;R)$, $i=1,2$.
Since $\dim M_i\ge2$, the  Mayer-Vietoris sequence gives $L=L_1\oplus L_2$.
The additive structure is given by the induced maps of the inclusions; the cup product translates into component-wise product: 
\begin{align*}%
x\smile y=(x_1\smile y_1)+(x_2\smile y_2),
\end{align*}
where $x,y\in L$ and $x_i,y_i\in L_i$ are projections.
Then, for fields, Lemma~\ref{lem:sum} gives the result.
Now for $R=\ZSet$, the result follows from Lemma~\ref{lem:h(G)=h(L)}.
\end{proof}

\begin{example}\label{ex:H(M^2_g)}
By Theorem~\ref{theor:conn_sum} and given Example~\ref{ex:H(T)}, or by~\eqref{eq:h=0} if $g=0$,
for a closed orientable surface of genus $g$ it holds
$$
\h(M_g^2;R)=\h(\connsum_{i=1}^g T^2;R)=\sum_{i=1}^g \h(T^2;R)=\set{g}.
$$
\end{example}

\begin{example}
Consider $M=M^2_2\times S^1$ from Example~\ref{ex:M22xS1} with $\h(M;\ZSet)=\set{1,2}$; see \figurename~\ref{fig:M22xS1}. Then
$$\h(M\connsum M;\ZSet)=\set{2,3,4}$$
and $h(M\connsum M;\ZSet)=4$.
\end{example}

\begin{example}
Consider $M=M^2_a\times M^2_b$, surfaces of genus $a$ and $b$, $1\le a\le b$. Theorem~\ref{theor:dir_prod} below gives
$\h(M;R)=\set{1,a,b}.$ Therefore,
$$\h(M\connsum M;R)=\set{2,a+1,b+1,a+b,2\,a,2\,b}$$
and $h(M\connsum M;R)=2\,b$.
\end{example}

\section{\label{sec_dir-prod}Isotropy index of the direct product of manifolds}

\begin{lemma}\label{lem:xy=yx}
Let $L$ be a vector space and $x,y,u,v\in L$; $x,y\ne 0$.
Then $x\otimes v=u\otimes y$ implies $u=ax$, $v=ay$ for some $a$.
\end{lemma}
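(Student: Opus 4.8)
The plan is to work in the tensor product $L\otimes L$ and exploit the equality $x\otimes v=u\otimes y$ as an identity among simple (decomposable) tensors. First I would dispose of degenerate cases: if $v=0$ then $x\otimes v=0$, and since $y\ne0$ the equation $u\otimes y=0$ forces $u=0$, so $u=ax$, $v=ay$ holds with $a=0$; symmetrically if $u=0$. So assume $u,v\neq0$ as well; then both sides are nonzero decomposable tensors.

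The key step is the standard fact that two nonzero decomposable tensors $x\otimes v$ and $u\otimes y$ are equal only when the factors are proportional in a matched way. Concretely, I would argue that $u$ must be a scalar multiple of $x$. Suppose not, so $\{x,u\}$ is linearly independent; extend to a basis of $L$ and pick a linear functional $\lambda\in L^{*}$ with $\lambda(x)=1$, $\lambda(u)=0$. Applying $\lambda\otimes\mathrm{id}$ to both sides of $x\otimes v=u\otimes y$ gives $\lambda(x)\,v=\lambda(u)\,y$, i.e.\ $v=0$, contradicting $v\neq0$. Hence $u=ax$ for some scalar $a$, and since $u\neq0$ we have $a\neq0$.

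Finally I would recover $v=ay$. Substituting $u=ax$ into the hypothesis yields $x\otimes v=(ax)\otimes y=x\otimes(ay)$, so $x\otimes(v-ay)=0$. Because $x\neq0$, a nonzero vector, the relation $x\otimes w=0$ forces $w=0$ (again seen by applying $\lambda\otimes\mathrm{id}$ with $\lambda(x)=1$), so $v-ay=0$, i.e.\ $v=ay$. This gives $u=ax$ and $v=ay$ with the same scalar $a$, as required.

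The only subtle point — and the part most in need of care rather than difficulty — is handling the vanishing cases uniformly so that a single scalar $a$ works throughout: in the degenerate subcases one must check that the proportionality can be realized with one common $a$ (which is forced once one side vanishes, since then by the equation the paired factor vanishes too). No step requires more than the universal property of the tensor product and the separation of vectors by linear functionals, so I do not expect a genuine obstacle; the argument is elementary once one commits to the functional-contraction technique.
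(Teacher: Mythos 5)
Your proof is correct, but it follows a different route from the paper's. The paper argues in coordinates: writing the hypothesis as $x_iv_j=u_iy_j$ for all $i,j$, it forms the ratios $u_i/x_i=v_j/y_j=a_{ij}$ over the pairs with $x_i,y_j\ne0$, observes that $a_{ij}$ is independent of both indices (hence equals a single scalar $a$), and then disposes of the zero coordinates directly from $x_iv_j=u_iy_j$. You instead use the functional-contraction (slice map) technique: after isolating the degenerate cases $u=0$ or $v=0$ (where $a=0$ works, since each forces the other via the nonvanishing of $x$ and $y$), you apply $\lambda\otimes\mathrm{id}$ with $\lambda$ chosen to separate $x$ from $u$, concluding $u=ax$, and then a second contraction shows $x\otimes(v-ay)=0$ forces $v=ay$. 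Both arguments are elementary and complete; yours is essentially basis-free (a basis enters only to manufacture $\lambda$), packages the zero cases up front rather than interleaving them with the main computation as the paper does, and makes explicit the general principle that a nonzero decomposable tensor determines its factors up to a matched scalar --- which also makes your version adapt more readily to settings where vectors are separated by functionals (e.g.\ free modules). The paper's coordinate computation buys self-containedness at the cost of slightly fiddlier bookkeeping with vanishing coordinates; in particular your uniform-scalar worry is exactly the point the paper handles via the independence of $a_{ij}$ from $i$ and $j$, and both proofs resolve it correctly.
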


\begin{proof}
Coordinate-wise, we have
\begin{align}
x_iv_j=u_iy_j
\label{eq:xv=uy}
\end{align}
for all $i,j$. For those $i,j$ for which $x_i,y_j\ne0$, this gives $$\frac{u_i}{x_i}=\frac{v_j}{y_j}=a_{ij}.$$ Since $a_{ij}$ does not depend on $i$ or $j$, all $a_{ij}=a$. We obtain $u_i=ax_i$ if $x_i\ne0$ and $v_j=ay_j$ if $y_j\ne0$. 
If $x_i=0$,~\eqref{eq:xv=uy} gives $0=u_iy_j$ for all $j$, thus $u_i=0$, and similarly $y_j=0$ implies $v_j=0$.
\end{proof}

While in Lemma~\ref{lem:sum} we had $\varphi|_{L_1\times L_2}=0$, now consider 
$\im\varphi|_{L_1\times L_2}$
as large as possible: %

\begin{lemma}\label{lem:prod}
Let\/ $L_i$, $V_i$, $i=1,2$, be final-dimensional vector spaces over a field $F$ and\/
$\varphi_i\colon L_i\times L_i\to V_i$ be bilinear skew-symmetric maps. 
Denote
\begin{align*}
L&=L_1\oplus L_2,\\
V&=V_1\oplus V_2\oplus V_3,
\end{align*}
where $V_3=L_1\otimes L_2$, and let $\varphi\colon L\times L\to V$ be a bilinear skew-symmetric map such that
\begin{gather}
\begin{aligned}
\varphi|_{L_i\times L_i}&=\varphi_i,\\
\varphi|_{L_1\times L_2}&=\otimes;
\end{aligned}
\notag
\shortintertext{i.e.,}
\varphi(x,y)=\underbrace{\varphi_1(x_1,y_1)}_{\in V_1}+\underbrace{\varphi_2(x_2,y_2)}_{\in V_2}+\underbrace{x_1\otimes y_2-y_1\otimes x_2}_{\in V_3},
\label{eq:phi=phi+phi+phi}
\end{gather}
where $x_i,y_i\in L_i$ are projections.
Then:
\begin{enumerate}
\item
A subspace $H\subseteq L$ is isotropic iff $$\dim H=1 \textforidiot{\har F=2?} \textrm{ or }H=H_i,$$
where $H_i\subseteq L_i$ is isotropic under $\varphi_i$, for $i=1\textrm{ or }2$.
\item\label{item:x2}
The set of dimensions of maximal isotropic subspaces
$$
\h({\phi})=\set{1}\cup \h({\phi}_1)\cup \h({\phi}_2)
$$
except that\/ $\h({\phi})=\h({\phi}_i)$ if\/ $h(\phi_j)=0$, i.e., if either
\begin{itemize}
\renewcommand\labelitemi{--}
\itemsep0em
\item $L_j=0$ or 
\item $L_j=F$, $\har F=2$, and $\varphi_j\not\equiv0$.
\end{itemize}
\item
The isotropy index $$h({\phi})=\max\set{h({\phi}_1),h({\phi}_2)}.$$
\end{enumerate}
\end{lemma}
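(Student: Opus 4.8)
The plan is to prove part~(1) first, since parts~(2) and~(3) are bookkeeping once the isotropic subspaces are classified. The decisive feature of \eqref{eq:phi=phi+phi+phi} is that its three summands lie in the independent summands $V_1$, $V_2$, and $V_3=L_1\otimes L_2$, so for $x,y\in L$ the equation $\varphi(x,y)=0$ is equivalent to the three conditions $\varphi_1(x_1,y_1)=0$, $\varphi_2(x_2,y_2)=0$, and $x_1\otimes y_2=y_1\otimes x_2$ in $L_1\otimes L_2$.

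For the forward direction of part~(1), let $H$ be isotropic, and I would distinguish two cases according to whether $H$ contains a genuinely mixed vector. If some $x\in H$ has both projections $x_1\neq0$ and $x_2\neq0$, then for every $y\in H$ the third condition reads $x_1\otimes y_2=y_1\otimes x_2$, and Lemma~\ref{lem:xy=yx} (applied inside $(L_1\oplus L_2)\otimes(L_1\oplus L_2)$ with the roles $x\mapsto x_1$, $v\mapsto y_2$, $u\mapsto y_1$, $y\mapsto x_2$) forces $y_1=a x_1$ and $y_2=a x_2$, i.e. $y=ax$; hence $H=\langle x\rangle$ is one-dimensional. Otherwise every vector of $H$ lies in $L_1$ or in $L_2$, so $H=(H\cap L_1)\cup(H\cap L_2)$; since a vector space is never the union of two proper subspaces, $H\subseteq L_1$ or $H\subseteq L_2$, where $\varphi$ restricts to $\varphi_i$, so $H=H_i$ is isotropic under $\varphi_i$. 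The converse is immediate for $H\subseteq L_i$, while for $\dim H=1$ one computes $\varphi(x,x)=\varphi_1(x_1,x_1)+\varphi_2(x_2,x_2)$, which vanishes automatically when $\har F\neq2$; this is exactly where the characteristic-two flag in the statement enters.

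Next I would record the maximality relations needed for part~(2). A one-dimensional $\langle x\rangle$ with $x_1,x_2\neq0$, once isotropic, is maximal in $L$: any strictly larger isotropic subspace would, by part~(1), be contained in some $L_i$, contradicting $x\notin L_i$. A nonzero maximal isotropic $H_i\subseteq L_i$ under $\varphi_i$ stays maximal in $(L,\varphi)$, since an isotropic enlargement would again be one-dimensional or lie in $L_1$ or $L_2$, and each possibility either returns $H_i$ or forces $H_i\subseteq L_1\cap L_2=0$. Conversely, by part~(1) every maximal isotropic subspace of $L$ is either one-dimensional or a maximal isotropic subspace of some $L_i$, giving $\h(\varphi)\subseteq\set{1}\cup\h(\varphi_1)\cup\h(\varphi_2)$, with the reverse inclusion supplied by these witnesses. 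The only delicate point, and the source of the exception, is whether $1$ is actually attained: because $\varphi(x,x)=\varphi_1(x_1,x_1)+\varphi_2(x_2,x_2)$ with the two terms in the independent summands $V_1,V_2$, a mixed vector is isotropic iff each $x_i$ is separately isotropic for $\varphi_i$, so an isotropic mixed line exists iff both $L_1$ and $L_2$ carry nonzero isotropic vectors, i.e. iff $h(\varphi_1)\geq1$ and $h(\varphi_2)\geq1$. By Lemma~\ref{lem:h(L)=0}, the failure $h(\varphi_j)=0$ is precisely the listed alternative ($L_j=0$, or $L_j=F$ with $\har F=2$ and $\varphi_j\not\equiv0$), in which case no mixed line is isotropic and $\h(\varphi)=\h(\varphi_i)$.

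Finally, part~(3) follows by taking maxima: $h(\varphi)=\max(\set{1}\cup\h(\varphi_1)\cup\h(\varphi_2))=\max\set{1,h(\varphi_1),h(\varphi_2)}$, and the spurious $1$ is dominated whenever $\max\set{h(\varphi_1),h(\varphi_2)}\geq1$; in the remaining case $h(\varphi_1)=h(\varphi_2)=0$ the exception gives $\h(\varphi)=\set{0}$ and hence $h(\varphi)=0=\max\set{h(\varphi_1),h(\varphi_2)}$. I expect the main obstacle to be the forward direction of part~(1): pinning down, via the rigidity of rank-one tensors in Lemma~\ref{lem:xy=yx}, that a single mixed isotropic vector collapses the whole subspace to a line, and then handling the characteristic-two bookkeeping that governs exactly when the value $1$ is realized.
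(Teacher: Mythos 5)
Your proposal is correct and takes essentially the same route as the paper's proof: the decisive step in both is that a vector of $H$ with both projections nonzero forces, via the independence of the three components of \eqref{eq:phi=phi+phi+phi} and Lemma~\ref{lem:xy=yx}, that $H$ is a line, with the exceptional cases in item~(2) delegated to Lemma~\ref{lem:h(L)=0}. You merely spell out bookkeeping the paper leaves implicit---the union-of-two-proper-subspaces step, the maximality of mixed isotropic lines and of the maximal $H_i\subseteq L_i$, when the value $1$ is actually attained, and the characteristic-two caveat in part~(1) that the paper itself flags.
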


Note that in contrast to Lemma~\ref{lem:sum}, the first conclusion does not require $H$ to be maximal.

\begin{proof}
Let $H$ be an isotropic subspace.
We will show that if both projections $p_i(H)\ne 0$, then $\dim H=1$.
Consider $x\in H$ such that both projections $x_i\ne 0$. Let $y\in H$.
Since $H$ is isotropic and the three components of~\eqref{eq:phi=phi+phi+phi} are independent, we have
$$
\varphi_1(x_1,y_1)=\varphi_2(x_2,y_2)=x_1\otimes y_2-y_1\otimes x_2=0.
$$
By Lemma~\ref{lem:xy=yx}, $y\in\langle x\rangle$.
The conditions for $h(\phi_j)=0$ in item~\eqref{item:x2} are given by Lemma~\ref{lem:h(L)=0}.
\end{proof}

\begin{theorem}\label{theor:dir_prod}
Let $M_1$, $M_2$ be connected closed  manifolds and $R$ be a field or $R=\ZSet$.
Then for the direct product $M_1 \times M_2$:
\begin{enumerate}
\item
A submodule $H\subseteq H^1(M_1 \times M_2;R)$ is isotropic iff 
$$\rk H=1\textrm{ or }H=H_i,$$
where $H_i\subseteq H^1(M_i;R)$ is isotropic for $M_i$, $i=1\text{ or }2$.
\item
The set of ranks of maximal isotropic submodules %
$$
\h(M_1 \times M_2;R)=\set{1}\cup \h(M_1;R)\cup \h(M_2;R)
$$
except that\/ $\h(M_1 \times M_2;R)=\h(M_i;R)$ if\/ $h(M_j;R)=0$, i.e., if either
\begin{itemize}
\renewcommand\labelitemi{--}
\itemsep0em
\item $b_1(M_j;R)=0$, the Betti number, or 
\item $b_1(M_j;R)=1$, $\har R=2$, and\/ ${\smile}\not\equiv0$.
\end{itemize}
\item
The isotropy index of the direct product 
$$
h(M_1 \times M_2;R)=\max\set{h(M_1;R), h(M_2;R)}.
$$
\end{enumerate}
\end{theorem}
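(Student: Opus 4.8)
The plan is to reduce all three conclusions to the purely algebraic Lemma~\ref{lem:prod} by identifying the cup-product structure on $H^1(M_1\times M_2;R)$ through the Künneth theorem, and then to pass from $R=\ZSet$ to $R=\QSet$ using Lemma~\ref{lem:h(G)=h(L)} (applicable since its domain $H^1(M;\ZSet)$ is free abelian). First, over a field $F$ I would set $L_i=H^1(M_i;F)$ and $L=H^1(M_1\times M_2;F)$. As the $M_i$ are connected, $H^0(M_i;F)=F$, so Künneth in degree one gives the splitting $L=L_1\oplus L_2$, the summands being the pullbacks $\pi_1^*L_1$ and $\pi_2^*L_2$ along the projections $\pi_i\colon M_1\times M_2\to M_i$. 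Künneth in degree two produces a decomposition of $H^2(M_1\times M_2;F)$ containing the independent summands $V_1=H^2(M_1;F)$, $V_2=H^2(M_2;F)$, and the $H^1\otimes H^1$ term $V_3=L_1\otimes L_2$.

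Next I would compute $x\smile y$ for $x=x_1+x_2$, $y=y_1+y_2$ with $x_i,y_i\in L_i$. The diagonal terms give $x_i\smile y_i=\pi_i^*(x_i\smile y_i)\in V_i$, i.e.\ the cup products $\varphi_i$ on each $H^1(M_i;F)$. For the cross terms, $\pi_1^*x_1\smile\pi_2^*y_2$ is by definition the cross product $x_1\otimes y_2\in V_3$, while graded-commutativity of the cup product on degree-one classes turns $\pi_2^*x_2\smile\pi_1^*y_1$ into $-y_1\otimes x_2$. Hence
\[
x\smile y=(x_1\smile y_1)+(x_2\smile y_2)+(x_1\otimes y_2-y_1\otimes x_2),
\]
which is exactly~\eqref{eq:phi=phi+phi+phi}. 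Thus $(L,\,V_1\oplus V_2\oplus V_3,\,\smile)$ is precisely the data of Lemma~\ref{lem:prod} with $\varphi_i$ the cup products on the factors, and all three conclusions follow over $F$; the exceptional clause in item~2 is the translation, via Lemma~\ref{lem:h(M)=0}, of the condition $h(\varphi_j)=0$ (namely $b_1(M_j;F)=0$, or $b_1(M_j;F)=1$ with $\har F=2$ and $\smile\not\equiv0$).

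For $R=\ZSet$ I would first note that $H^1(M_i;\ZSet)$ is free abelian, so Künneth has no Tor contribution in degree one and the clean splitting $L=L_1\oplus L_2$ with $L_i=H^1(M_i;\ZSet)$ persists, the cup-product formula above being unchanged. Items~2 and~3 then follow from the $\QSet$ case together with Lemma~\ref{lem:h(G)=h(L)}, which gives $\h(M_i;\ZSet)=\h(M_i;\QSet)$ and $\h(M_1\times M_2;\ZSet)=\h(M_1\times M_2;\QSet)$; since $\har\ZSet=\har\QSet=0$, the second exceptional bullet never triggers and the remaining exception $b_1(M_j)=0$ matches on both sides. For item~1 I would tensor an isotropic subgroup $H\subseteq L$ with $\QSet$ and apply the field conclusion to $H_\QSet$: either $\dim H_\QSet=\rk H=1$, or $H_\QSet\subseteq(L_i)_\QSet$ for one $i$, in which case the complementary projection $p_j(H)$ is a torsion subgroup of the free group $L_j$ and hence vanishes, so $H\subseteq L_i$ is isotropic for $M_i$.

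The main obstacle is the bookkeeping in the cohomological identification: getting the sign in the cross term right (this is what yields the skew form $x_1\otimes y_2-y_1\otimes x_2$ rather than a symmetric expression, and is what makes $\varphi$ skew-symmetric), and verifying that the three Künneth summands $V_1,V_2,V_3$ are genuinely independent so that isotropy decouples component-wise as Lemma~\ref{lem:prod} requires. Everything after this identification is either a direct appeal to Lemma~\ref{lem:prod} or the routine $\ZSet\to\QSet$ descent, for which torsion-freeness of $H^1$ is the only ingredient beyond Lemma~\ref{lem:h(G)=h(L)}.
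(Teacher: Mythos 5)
Your proposal is correct and takes essentially the same route as the paper's proof: the K\"unneth splitting $L=L_1\oplus L_2$ and $V=V_1\oplus V_2\oplus V_3$ with $V_3=L_1\otimes L_2$, verification that the cup product satisfies~\eqref{eq:phi=phi+phi+phi} (with the sign coming from graded commutativity of degree-one classes), appeal to Lemma~\ref{lem:prod} over a field, and passage to $R=\ZSet$ via Lemma~\ref{lem:h(G)=h(L)}. You simply make explicit some details the paper leaves implicit---the absence of Tor terms in the integral K\"unneth formula and the tensor-with-$\QSet$ argument handling non-maximal isotropic subgroups in item~1 over $\ZSet$---which only strengthens the write-up.
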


\begin{proof}
Let $R$ be a field. 
Denote
\begin{align*}
L_i&=H^1(M_i,R),\quad i=1,2,&L&=H^1(M_1 \times M_2,R),\\
V_i&=H^2(M_i,R),\quad i=1,2,&V&=H^2(M_1 \times M_2,R).
\end{align*}
By the K\"unneth formula,
\begin{align*}
L&=L_1\oplus L_2,\\
V&=V_1\oplus V_2\oplus V_3,
\shortintertext{where}
V_3&=L_1\otimes L_2.
\end{align*}
By construction, $L_i\smile L_i\subseteq V_i$ for $i=1,2$; $L_1\smile L_2\subseteq V_3$, and~\eqref{eq:phi=phi+phi+phi} holds
\textforidiot{by definition of cup product}
for the cup-products in $M_1\times M_2$ and $M_i$, respectively.
Lemma~\ref{lem:prod} gives the result for fields and Lemma~\ref{lem:h(G)=h(L)} for $R=\ZSet$.
\end{proof}

Example~\ref{ex:Heisenberg-detailed} shows that in Theorem~\ref{theor:dir_prod}, the direct product cannot be replaced by an arbitrary fiber bundle.

\begin{example}\label{ex:h_R(T^n)}
By Lemma~\ref{lem:h(M)=0},
$h(S^1;R)=1$, so for a torus 
$T^n=\opTimes_{i=1}^n S^1$,
we have $h(T^n;R)=1$.
\end{example}

\begin{example}\label{ex:S1*SN}
By Lemma~\ref{lem:h(M)=0},
$h(S^n;R)=0$, $n\ge 2$, and %
$h(S^1;R)=1$, so $h(S^n\times S^1;R)=1$.
\end{example}

\begin{proposition}\label{prop:hS}
For any non-empty finite set\/ $S\subset\ZSet^*$ of non-negative integers and for $R=\ZSet$ or $R$ being a field, 
$S=\h(M;R)$ for some smooth closed orientable connected manifold\/ $M$ iff\/ $S=\set{0}$ or $0\notin S$. 
\end{proposition}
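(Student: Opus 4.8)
The plan is to prove the two directions separately. The ``only if'' direction is immediate: applying~\eqref{eq:0notinHorH=0} to the cup product, the set $\h(M;R)$ is always a finite set of non-negative integers with $0\notin\h(M;R)$ or $\h(M;R)=\set{0}$, which is exactly the asserted restriction. All the content lies in the ``if'' direction, where I would realize every admissible $S$ by an explicit manifold assembled from surfaces and spheres via the connected-sum and direct-product formulas of Theorems~\ref{theor:conn_sum} and~\ref{theor:dir_prod}.

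\textbf{Trivial cases.} If $S=\set{0}$, take $M=S^2$, so that $H^1(S^2;R)=0$ and hence $\h(S^2;R)=\set{0}$. If $S=\set{a}$ is a singleton with $a\ge1$, take the genus-$a$ surface $M=M_a^2$; by Example~\ref{ex:H(M^2_g)}, $\h(M_a^2;R)=\set{a}$ for every admissible $R$. From now on assume $0\notin S$ and $|S|\ge2$, say $S=\set{s_1,\dots,s_m}$ with $m\ge2$ and $a=\min_i s_i\ge1$.

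\textbf{The shift construction.} The guiding principle is that the product formula of Theorem~\ref{theor:dir_prod} forces the value $1$ into $\h$, so direct products realize exactly the sets containing $1$, whereas the connected-sum formula of Theorem~\ref{theor:conn_sum} translates a whole set by a constant. I would therefore first realize the shifted set $S'=\set{s-a+1\mid s\in S}$, which has $m$ distinct positive elements and contains $1$ (the image of $a$), and then translate back by $a-1$. Writing the elements of $S'$ that exceed $1$ as $t_2,\dots,t_m$ (each at least $2$), put $M'=T^2\times M_{t_2}^2\times\cdots\times M_{t_m}^2$, a product of $m\ge2$ surfaces, of total dimension $2m$. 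Iterating Theorem~\ref{theor:dir_prod} and using $\h(T^2;R)=\set{1}$ and $\h(M_{t_i}^2;R)=\set{t_i}$---no exception ever applies, since every factor and every partial product has isotropy index $\ge1$---gives $\h(M';R)=\set{1}\cup\set{t_2}\cup\cdots\cup\set{t_m}=S'$. The extra $T^2$ factor guarantees at least two factors, hence the $1$, even when only one $t_i$ is present. If $a=1$, then $S'=S$ and $M=M'$ already works. If $a\ge2$, I would form the connected sum $M=M'\connsum N$ with a manifold $N$ of the same dimension $2m$ and $\h(N;R)=\set{a-1}$; Theorem~\ref{theor:conn_sum} then yields $\h(M;R)=\h(M';R)+\set{a-1}=S'+\set{a-1}=S$, the set sum being element-wise. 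For $N$ I would take $N=M_{a-1}^2\times S^{2m-2}$: its sphere factor has dimension $2m-2\ge2$ and hence isotropy index $0$ by Lemma~\ref{lem:h(M)=0}, so the \emph{exceptional} clause of Theorem~\ref{theor:dir_prod} gives $\h(N;R)=\h(M_{a-1}^2;R)=\set{a-1}$ with no spurious $1$, while $\dim N=2+(2m-2)=2m=\dim M'$ as required. In every case $M$ is a smooth closed orientable connected manifold with $\h(M;R)=S$, and all computations are uniform in $R$.

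\textbf{Main obstacle.} The difficulty is the interplay forced by the two formulas: products are the only device that produces non-singleton sets, yet they invariably inject the element $1$, while the connected-sum shift that removes it requires matching dimensions of the summands. The delicate point is to keep the common dimension even and at least $4$, so that the auxiliary sphere in $N$ has dimension $\ge2$ and contributes isotropy index $0$ through the exception---rather than dimension $1$, where an $S^1$ factor would reintroduce the value $1$ and spoil $\h(N;R)$. Working throughout in dimension $2m$ sidesteps dimension $3$ and makes all the pieces fit.
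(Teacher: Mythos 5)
Your proof is correct and is essentially the paper's own construction: the paper likewise realizes the shifted set by the product $M_1=M^2_{s_1-m+1}\times\cdots\times M^2_{s_N-m+1}$ (whose minimum-factor is your $T^2$) and translates back by taking the connected sum with $M_2=M^2_{m-1}\times S^{2N-2}$ of the same dimension $2N$, invoking Theorems~\ref{theor:conn_sum} and~\ref{theor:dir_prod} exactly as you do. The only cosmetic difference is that you treat $a=1$ separately by omitting the summand $N$, where the paper uniformly keeps $M_2$ (then $M^2_0\times S^{2N-2}$, contributing $\set{0}$).
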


\begin{proof}
\textforidiot{($\Rightarrow$) The conditions on $\h(M;R)$ follow from the fact that $\set{0}$ is a subset of any submodule.}
If $S=\set{g}$, then $S=\h(M^2_g;R)$, a surface of genus $g$; see Example~\ref{ex:H(M^2_g)}.
Let now $S=\set{s_1,\dots,s_N}$, $N\ge2$. By the condition, $m=\min S\ge1$.
Consider
\begin{alignat*}2
M_1&=M^2_{s_1-m+1}\times\cdots\times M^2_{s_N-m+1},&\quad&\dim M_1=2N,\\
M_2&=M^2_{m-1}\times S^{2N-2},&&\dim M_2=2N.
\end{alignat*}
By Theorems~\ref{theor:conn_sum} and~\ref{theor:dir_prod}, we obtain $\h(M_1\connsum M_2;R)=S$.
\end{proof}

Another application of Theorem~\ref{theor:dir_prod} can be found 
in study of the topology of foliations defined by Morse forms. 
It is known that if the subgroup of $H_{n-1}(M)$ generated by the homology classes of all compact leaves of the foliation is maximal isotropic, then the foliation has no minimal components~\cite{Meln4}. 
This condition obviously holds true when the foliation has $h(M;\ZSet)$ homologically independent compact leaves.
However, if $M=M_1\times M_2$, in some cases Theorem~\ref{theor:dir_prod} allows to conclude that the foliation has no minimal components by considering only one leaf: 

\begin{example}\label{ex:M22xS1-foliation}
As has been mentioned in Example~\ref{ex:M22xS1},
$\h(M^2_2\times S^1;\ZSet)=\set{1,2}$; see \figurename~\ref{fig:M22xS1}.
Even though $h(M^2_2\times S^1,\ZSet)=2$, if a Morse form foliation has the submanifold $N=M^2_2$ as a leaf, then it has no minimal components.
In contrast, nothing can be said about a form that has $T_1=T^2$ as a leaf, because the system $\set{T_1}$ is not maximal.
\end{example}

\begin{example}\label{ex:H(M2gxM2g)}
$\h(M^2_a\times M^2_b;\ZSet)=\set{1,a,b}$, $a,b\ge1$.
Now consider a cycle $z$ that winds around the $M_1=M^2_a$ and also around the $M_2=M^2_b$, that is, $z=z_1+z_2$, $0\ne z_i\in H^1(M_i,\ZSet)$. 
If a Morse form foliation has a leaf dual to $z$, 
then it has no minimal components.
\end{example}

\section{\label{sec_realization:h,b}Isotropy index and the first Betti number}

By definition of the isotropy index, 
\begin{gather*}
h(M;R)\le b_1(M;R);
\shortintertext{for example:}
\begin{alignedat}2
h(S^1;R)&=1,\quad&b_1(S^1;R)&=1;\\
h(M^2_g;R)&=g,\quad&b_1(M^2_g;R)&=2g;\\
h(T^n;R)&=1,\quad&b_1(T^n;R)&=n.
\end{alignedat}
\end{gather*}
The only relation between $h(M;R)$, $b_1(M;R)$, and $R$ is given by Lemma~\ref{lem:h(M)=0}; in particular, any gap between $h(M;R)$ and $b_1(M;R)$ is possible for a given $R$:

\begin{theorem}\label{theor:h,b}
Let\/ $h,b\in\ZSet$, and $R$ be a field or $R=\ZSet$. 
There exists a connected smooth closed orientable manifold\/ $M$ with $h(M;R)=h$ and\/ $b_1(M;R)=b$ iff any of the following conditions holds:
\begin{itemize}
\renewcommand\labelitemi{--}
\itemsep0em
\item $1\le h\le b$, or
\item $h=b=0$, or
\item $h=0$, $b=1$, and\/ $\har R=2$.
\end{itemize}
\end{theorem}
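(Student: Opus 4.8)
The statement is an iff, so I must prove both directions. The forward direction (necessity of the three listed conditions) is essentially a restatement of Lemma~\ref{lem:h(M)=0}, which already establishes $1\le h(M;R)\le b_1(M;R)$ except precisely in the degenerate cases $b_1=0$ or ($b_1=1$, $\har R=2$, ${\smile}\not\equiv0$). So the real content is the backward direction: \emph{realization}. Given any pair $(h,b)$ satisfying one of the three conditions, I must exhibit a connected smooth closed orientable manifold $M$ with $h(M;R)=h$ and $b_1(M;R)=b$.

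\textbf{Realization strategy.}
The plan is to build $M$ as a connected sum of direct products of surfaces and spheres, using Theorems~\ref{theor:conn_sum} and~\ref{theor:dir_prod} as the computational engine. The two degenerate cases are handled immediately: for $h=b=0$ take $M=S^n$ ($n\ge2$), which has $b_1=0$ and $h=0$ by Lemma~\ref{lem:h(M)=0}; for $h=0$, $b=1$, $\har R=2$ take $M=\RSet P^3$ as in Example~\ref{ex:RP3}, where $b_1(\RSet P^3;\ZSet_2)=1$ and $\al\smile\al\ne0$ gives $h=0$. For the main regime $1\le h\le b$, I want building blocks with prescribed $(h,b_1)$. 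The surface $M^2_g$ supplies $h=g$, $b_1=2g$ (so $b_1=2h$); the torus $T^n$ supplies $h=1$, $b_1=n$; and crucially $S^{2}\times S^1$-type or $S^n\times S^1$ blocks supply $h=1$, $b_1=1$ while letting me pad dimensions. Since under connected sum both $h$ and $b_1$ add (Theorem~\ref{theor:conn_sum} and the fact that $b_1$ is additive under connected sum for $\dim\ge2$), I can assemble any target $(h,b)$ with $h\ge1$ and $b\ge h$ by, for instance, taking a connected sum of $h$ copies of $S^1\times S^{n-1}$ to get $h(M;R)=h$ and $b_1=h$, then connect-summing with a manifold of isotropy index $0$ but positive first Betti number to raise $b_1$ up to $b$ without changing $h$. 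A clean such ``$b_1$-raising block with $h=0$'' is $S^n$ (no help) — instead I would increase $b_1$ using a factor like $T^n$ inside a direct product, where Theorem~\ref{theor:dir_prod} keeps the index at $\max$ while the Betti number grows.

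\textbf{Assembling the generic case.}
Concretely, to realize $1\le h\le b$: first take $N=\connsum_{i=1}^{h} S^1\times S^{2}$, a connected sum of $h$ copies, giving $h(N;R)=h$ (each factor contributes index $1$, additive under $\connsum$) and $b_1(N;R)=h$. If $b=h$ we are done. If $b>h$, I need to inject $b-h$ more into the first Betti number without increasing the index. The device is the direct product: form $M=N\times M'$ where $M'$ has $h(M';R)\le h$ so that $\max$ is unaffected, while $b_1(M;R)=b_1(N;R)+b_1(M';R)$ — using that $b_1$ is additive for direct products of connected manifolds. Choosing $M'$ to be a high-genus surface or a torus whose index is at most $h$ but whose $b_1$ equals $b-h$ completes the construction. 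I must check that the direct-product index formula $h(M_1\times M_2;R)=\max\{h(M_1;R),h(M_2;R)\}$ applies (it does by Theorem~\ref{theor:dir_prod}, and the exceptional clauses there don't interfere since all blocks have $b_1\ge1$ and, in the generic regime, we avoid the $\har R=2$, $b_1=1$ trap). The main obstacle will be bookkeeping the index and Betti contributions simultaneously so that the $\max$ in the product formula does not accidentally exceed $h$ — this forces me to keep every factor's index $\le h$ while tuning $b_1$, and to verify the exceptional cases of Theorems~\ref{theor:conn_sum} and~\ref{theor:dir_prod} are never triggered. Orientability and $\dim M\ge2$ are automatic for all chosen blocks, and connectedness is preserved under both $\connsum$ and $\times$.
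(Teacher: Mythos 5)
Your proposal is correct, and it uses the paper's own toolkit (Theorems~\ref{theor:conn_sum} and~\ref{theor:dir_prod}, with $S^n$ and $\RSet P^3$ for the two degenerate cases, exactly as the paper does), but the decomposition in the main case $1\le h\le b$ is genuinely different. The paper takes a \emph{connected sum of products}, $M=\opConnsum_{i=1}^h\bigl(T^{m_i}\times S^{n-m_i}\bigr)$ with $\sum_{i=1}^h m_i=b$: Theorem~\ref{theor:dir_prod} is applied inside each summand (giving $h(M_i;R)=1$, $b_1(M_i;R)=m_i$), and Theorem~\ref{theor:conn_sum} is applied once to add everything up. You instead take a \emph{product of a connected sum}, $M=\bigl(\opConnsum_{i=1}^h(S^1\times S^2)\bigr)\times T^{\,b-h}$: Theorem~\ref{theor:conn_sum} first yields $h(N;R)=b_1(N;R)=h$ for the sum $N$, and then the max-formula of Theorem~\ref{theor:dir_prod} raises $b_1$ by $b-h$ (Künneth additivity of $b_1$, valid over any field and over $\ZSet$) without changing the isotropy index, since $h(T^{\,b-h};R)=1\le h$. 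Your verification that the exceptional clauses of Theorem~\ref{theor:dir_prod} are never triggered (both top-level factors have positive index) is exactly the check that is needed; note also that the exceptions only affect the set $\h$, not the max-formula for $h$. Two small points: you should commit to $M'=T^{\,b-h}$ rather than hedging with ``a high-genus surface or a torus'' --- the surface alternative $M^2_g$ requires $b-h$ even and $g=(b-h)/2\le h$, so it does not always exist, whereas the torus always works; and your passing idea of connect-summing with an ``$h=0$, $b_1>0$'' block is indeed a dead end for $\har R\ne2$, since by Lemma~\ref{lem:h(M)=0} no such closed orientable manifold exists --- you were right to switch to the product device. The trade-off between the two constructions is dimension: yours has $\dim M=3+(b-h)$ for $b>h$, while the paper's only needs $n\ge2+\lceil b/h\rceil$, which is strictly smaller once $h\ge2$ and $b>h+1$; since the theorem prescribes no dimension, this costs nothing here, but it is why the paper's version scales better (compare the paper's remark after the proof and the follow-up proposition fixing $\dim M=3$ for $R=\ZSet_2$).
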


\begin{proof}
For $h=b=0$, consider $M=S^n$. 
For $h=0$ and $b=1$ with $\har R=2$, consider $M=\RSet P^3$; see Example~\ref{ex:RP3}.
Let now $1\le h\le b$. Choose $m_i\ge1$ such that 
\begin{align}
\sum_{i=1}^{h} m_i=b.\label{sum=b}
\end{align}
For large enough $n$ such that $n-m_i\ge2$ for all $i$, consider an $n$-manifold
$$
M=\opConnsum_{i=1}^h\left(T^{m_i}\times S^{n-{m_i}}\right).
$$
By Theorem~\ref{theor:dir_prod}, for each summand $M_i=T^{m_i}\times S^{n-{m_i}}$, we have $h(M_i;R)=1$, while $b_1(M_i;R)=m_i$.  
Then by Theorem~\ref{theor:conn_sum},
\begin{align*}
h(M;R)&=\sum_{i=1}^h h(M_i;R)=\sum_{i=1}^h1 =h,\\
b_1(M;R)&=\sum_{i=1}^h b_1(M_i,R)=\sum_{i=1}^h m_i=b.
\qedhere
\end{align*}
\end{proof}

Note that the construction used in the proof requires $n=\dim M\ge2+\lceil \frac b{h}\rceil$, the ceiling here being the smallest possible value for $\max\set{m_i}$ under~\eqref{sum=b}.
This condition is not restrictive when $h=b$, leading to 
$n\ge3$; it is not very restrictive when $\frac b2\le h<b$, leading to $n\ge4$, etc.;
for $\frac{b}{k+1}\le h<\frac{b}{k}$, $k=1,\dots,b-1$, we need $n\ge k+3$. This 
requires high dimension when $h\ll b$.

For $\ZSet_2$, however, 
$n=3$ is enough:

\begin{proposition}
For $R=\ZSet_2$, the manifold in Theorem~\ref{theor:h,b} can be chosen with any given $\dim M\ge3$.
\end{proposition}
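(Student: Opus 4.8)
The plan is to realize each admissible pair $(h,b)$ by a connected sum of standard $n$-dimensional building blocks, exactly as in the proof of Theorem~\ref{theor:h,b}, but choosing blocks of the prescribed dimension $n$ rather than blocks of growing dimension. Recall from Theorem~\ref{theor:h,b} that for $R=\ZSet_2$ (so $\har R=2$) the admissible pairs are $1\le h\le b$, together with $(h,b)=(0,0)$ and $(h,b)=(0,1)$. Since by Theorem~\ref{theor:conn_sum} the isotropy index is additive under connected sum, and since the same Mayer--Vietoris argument gives $b_1(M_1\connsum M_2;\ZSet_2)=b_1(M_1;\ZSet_2)+b_1(M_2;\ZSet_2)$ for $\dim M_i\ge2$, it suffices to exhibit, in each dimension $n\ge3$, three connected closed orientable $n$-manifolds with $(h,b_1)$ over $\ZSet_2$ equal to $(0,0)$, $(1,1)$, and $(0,1)$.

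The first two blocks are immediate: the sphere $S^n$ has $(h,b_1)=(0,0)$, and $S^1\times S^{n-1}$ has $b_1(\,\cdot\,;\ZSet_2)=1$ and, by Example~\ref{ex:S1*SN}, $h(\,\cdot\,;\ZSet_2)=1$, so it realizes $(1,1)$ (here $n\ge3$ guarantees $S^{n-1}$ has trivial $H^1$). Given these, for any $1\le h\le b$ I would set
$$
M=\bigl(\opConnsum_{i=1}^{h}(S^1\times S^{n-1})\bigr)\connsum\bigl(\opConnsum_{j=1}^{b-h}B_n\bigr),
$$
where $B_n$ is the $(0,1)$-block described below; additivity of $h$ and of $b_1$ then yields $h(M;\ZSet_2)=h$ and $b_1(M;\ZSet_2)=b$, while connectedness and orientability are preserved by connected sum. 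The two exceptional pairs are realized by $S^n$ (for $(0,0)$) and by $B_n$ itself (for $(0,1)$).

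It remains to produce the $(0,1)$-block: a connected closed orientable $n$-manifold $B_n$ with $H^1(B_n;\ZSet_2)=\ZSet_2\langle\alpha\rangle$ and $\alpha\smile\alpha\ne0$, so that $h(B_n;\ZSet_2)=0$ by Lemma~\ref{lem:h(M)=0} (as in Example~\ref{ex:RP3}). For $n$ odd this is simply $B_n=\RSet P^n$, which is orientable with $H^*(\RSet P^n;\ZSet_2)=\ZSet_2[\alpha]/(\alpha^{n+1})$, $\alpha^2\ne0$. \textbf{The main obstacle is the case of even $n$}, where $\RSet P^n$ fails to be orientable; one must instead find an orientable even-dimensional manifold carrying a one-dimensional $H^1(\,\cdot\,;\ZSet_2)$ whose generator squares nontrivially. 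I would resolve this with the Dold manifold $B_n=P(n-2,1)=(S^{n-2}\times\CSet P^1)/\!\sim$, where the free involution acts antipodally on $S^{n-2}$ and by complex conjugation on $\CSet P^1$: it has dimension $n$, is orientable precisely because $(n-2)+1$ is odd, and has $H^*(P(n-2,1);\ZSet_2)=\ZSet_2[c,d]/(c^{\,n-1},d^2)$ with $\deg c=1$, so $H^1=\ZSet_2\langle c\rangle$ and $c^2\ne0$ (using $n-2\ge2$).

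Alternatively, one can reduce all dimensions $n\ge5$ to the base cases $n=3,4$: if $M_0$ is a $3$-dimensional realization of $(h,b)$ then, by Theorem~\ref{theor:dir_prod} (whose exceptional clause applies since $h(S^{n-3};\ZSet_2)=0$) together with K\"unneth additivity of $b_1$, the manifold $M_0\times S^{n-3}$ realizes the same $(h,b)$ in dimension $n\ge5$; this confines the even-dimensional difficulty to the single block $P(2,1)$ in dimension $4$. Either way, the only real content beyond Theorem~\ref{theor:h,b} is the orientable even-dimensional $(0,1)$-block, for which the Dold manifold is the key input; the rest is bookkeeping with the additivity formulas already established.
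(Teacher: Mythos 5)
Your proposal is correct, and it diverges from the paper precisely at the one point where the paper invokes a non-elementary input. The skeleton is the same in both: realize $(0,0)$ by $S^n$, $(1,1)$ by $S^1\times S^{n-1}$, and $(0,1)$ by an orientable block with $b_1(\,\cdot\,;\ZSet_2)=1$ and nonzero cup square, then assemble general pairs $1\le h\le b$ by connected sum using additivity of $h$ (Theorem~\ref{theor:conn_sum}) and of $b_1(\,\cdot\,;\ZSet_2)$. The paper, however, constructs only in dimension $3$ (from $S^1\times S^2$ and $\RSet P^3$), lifts to $n\ge5$ by crossing with $S^{n-3}$ via the exceptional clause of Theorem~\ref{theor:dir_prod}, and must then handle dimension $4$ ad hoc: for $(h,b)=(0,1)$ it uses an Enriques surface $X=K3/\sigma$, verifying $x\smile x\ne0$ through a Bockstein/torsion argument on the $K3$ cover. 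You instead supply same-dimensional blocks in every dimension: $\RSet P^n$ for odd $n$, and for even $n$ the Dold manifold $P(n-2,1)$, whose orientability criterion you state correctly ($m+n$ odd for $P(m,n)$, here $(n-2)+1=n-1$; equivalently, the deck transformation antipodal${}\times{}$conjugation has degree $(-1)^{n}$) and whose ring $\ZSet_2[c,d]/(c^{\,n-1},d^2)$ indeed gives $c^2\ne0$ once $n-2\ge2$, so $h(P(n-2,1);\ZSet_2)=0$ by the same reasoning as in Example~\ref{ex:RP3}. Your bookkeeping is sound: all blocks are connected, closed, orientable of dimension $n\ge3$, the empty-sum cases ($h=b$, and $b-h=0$) degenerate correctly, and your alternative reduction $M_0\times S^{n-3}$ for $n\ge5$ is literally the paper's~\eqref{eq:M3timesS}. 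What each route buys: the paper keeps the inventory of blocks minimal (everything in dimension $3$ plus one dimension-$4$ exception) at the cost of importing the Enriques/K3 computation; your route replaces that algebraic-geometric input with Dold's classical and purely $\ZSet_2$-cohomological computation, yields explicit $(0,1)$-blocks in \emph{every} even dimension, and makes the construction uniform in $n$ -- arguably a cleaner and more self-contained proof of the same proposition.
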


\begin{proof}
Let $R=\ZSet_2$ and $\dim M=3$. For $h=b=0$, consider $M^3=S^3$.
Let now $b\ge1$. Consider 
\begin{align}
M^3=\left(\opConnsum_{i=1}^h\left(S^1\times S^2\right)\right)\opConnsum\left(\opConnsum_{i=1}^{b-h}\RSet P^3\right).
\label{eq:M3}
\end{align}
Example~\ref{ex:S1*SN} shows that $h(S^1\times S^2;\ZSet_2)=1$, thus Theorem~\ref{theor:conn_sum} implies
\begin{alignat*}4
h(M^3;\ZSet_2)  &=\sum_{i=1}^h h(S^1\times S^2;\ZSet_2)&&{}+\sum_{i=1}^{b-h} h(\RSet P^3;\ZSet_2) &&{}=\sum_{i=1}^h1+\sum_{i=1}^{b-h}0 &&{}=h,\\
b_1(M^3;\ZSet_2)&=\sum_{i=1}^h b_1(S^1\times S^2;\ZSet_2)&&{}+\sum_{i=1}^h b_1(\RSet P^3;\ZSet_2) &&{}=\sum_{i=1}^h1+\sum_{i=1}^{b-h}1 &&{}=b.
\end{alignat*}
This trivially generalizes to $\dim \ge 5$ as
\begin{align}
M^n=M^3\times S^{n-3}
\label{eq:M3timesS}
\end{align} 

Let now $\dim M=4$. For 
$1\le h<b$, we use~\eqref{eq:M3timesS} with one summand less in~\eqref{eq:M3}, namely,
$$
M^3=\left(\opConnsum_{i=1}^h\left(S^1\times S^2\right)\right)\opConnsum\left(\opConnsum_{i=1}^{b-h-1}\RSet P^3\right).
$$
For $h=b$, consider
$
M^4=\opConnsum_{i=1}^h(S^1\times S^3)
$.

Finally, for $h=0$, $b=1$, consider an Enriques surface $X$. 
\textforidiot{
http://mathoverflow.net/questions/209251/closed-orientable-4-manifold-with-h1m-bbb-z-2-bbb-z-2-and-non-zero-cup-pr
see also 
http://math.stackexchange.com/questions/1325568/closed-orientable-4-manifold-with-h1m-bbb-z-2-bbb-z-2-and-non-zero-cup-pr/1325764#1325764
}
Indeed,\footnote{Example contributed by a colleague who preferred not to be named.}
$X=K3/\sigma$, where $\sigma$ is an orientation-preserving fixed point-free involution; note that a $K3$ surface is simply connected. 
Then $H^1(X;\ZSet_2)=\Hom(\pi_1(X),\ZSet_2)=\ZSet_2$; thus $b_1(X;\ZSet_2)=1$.
For $0\ne x\in H^1(X;\ZSet)$, $x\smile x$ is a reduction (mod 2) of $\beta x$, where $\beta\from H^1(X;\ZSet_2)\to H^2(X;\ZSet)$ is the Bockstein homomorphism. 
Suppose $x\smile x=0$, i.e., for some $y\in H^2(X;\ZSet)$ we have $\beta x=2y$ with $\beta x\ne 0$ because $H^1(X;\ZSet)=0$. 
Since $\beta x$ is $2$-torsion, we obtain that $0\ne\pi^*y\in H^2(K3;\ZSet)$ is $4$-torsion, where $\pi$ is the quotient map, while the latter group is torsion-free. 
Thus $h(X;\ZSet)=0$.
\end{proof}

\section{\label{sec_corank}Isotropy index and the co-rank of the fundamental group}

In this section, we give a lower bound on $h(M;R)$ stronger than $1$ from~\eqref{eq:1<h<b}.

\begin{definition}
The {\em co-rank of the fundamental group} 
of a smooth closed connected manifold $M$ is 
the maximum rank of a free quotient group of $\pi_1(M)$; we denote it by $\b$.
\end{definition}

While $h(M;\ZSet)$ is the maximum number of homologically non-intersecting submanifolds $[X_i\cap X_j]=0$ (Theorem~\ref{theor:h(M)_geometry}), $\b$ 
strengthens the condition to
$X_i\cap X_j=\emptyset$:

\begin{theorem}[\textrm{\cite[Theorem 2.1]{Jaco72}}]\label{theor:b'(M)_geometry}
The co-rank of the fundamental group
$\b$ is the maximum number of non-intersecting homologically independent 
smooth closed orientable connected codimension-one submanifolds $X_i\subset M$:
$$X_i\cap X_j=\emptyset,$$ $i\ne j$, $i,j=1,\dots,\b$.
\end{theorem}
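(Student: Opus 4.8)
The plan is to prove the two inequalities squeezing $\b$ between the algebraic quantity (the maximal rank of a free quotient of $\pi_1(M)$) and the geometric one (the maximal number of disjoint independent hypersurfaces), using the classifying space $W_k=\bigvee_{i=1}^k S^1$, which is a $K(F_k,1)$ for the free group $F_k$ of rank $k$. The bridge in both directions is the homotopy bijection between maps $M\to W_k$ and homomorphisms $\pi_1(M)\to F_k$, combined with transversality: a generic smooth $f\from M\to W_k$ pulls back one interior point $p_i$ from each circle to a closed codimension-one submanifold $X_i=f^{-1}(p_i)$, and these are automatically pairwise disjoint because the $p_i$ are distinct points of $W_k$.

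\textbf{From submanifolds to a free quotient.} First I would show that any system of $k$ disjoint, connected, orientable, homologically independent codimension-one submanifolds $X_1,\dots,X_k\subset M$ yields $\b\ge k$. Each $X_i$ is two-sided, being orientable in the orientable $M$, so it has a tubular neighborhood $X_i\times(-1,1)$, and these may be taken pairwise disjoint. A key observation is that the complement $M\setminus\bigcup_i X_i$ is connected: a component $R\subsetneq M$ would have boundary consisting of some of the $X_i$, with at least one appearing on exactly one side since $R\ne M$, giving a relation $\sum_i\varepsilon_i[X_i]=0$ in $H_{n-1}(M)$ with $\varepsilon_i\in\{0,\pm1\}$ not all zero, contradicting independence. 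Now define $f\from M\to W_k$ by wrapping each tube $X_i\times(-1,1)$ once around the $i$-th circle and collapsing the rest of $M$ to the wedge point; because the tubes are disjoint, at most one coordinate is nontrivial at each point, so the image lands in $W_k$ rather than in the torus. Connectedness of the complement lets one join the two sides of each $X_i$ by a loop meeting $X_i$ once and avoiding all $X_j$, $j\ne i$; this loop maps to the $i$-th generator of $F_k$, so $f_*\from\pi_1(M)\to F_k$ is surjective and $\b\ge k$.

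\textbf{From a free quotient to submanifolds.} Conversely, given an epimorphism $\phi\from\pi_1(M)\to F_k$ with $k=\b$, I would realize it by a map $f\from M\to W_k$ (possible since $W_k$ is aspherical) and make $f$ smooth and transverse to interior points $p_1,\dots,p_k$, obtaining disjoint closed codimension-one submanifolds $X_i=f^{-1}(p_i)$. Each $X_i$ is coorientable, hence orientable, being Poincar\'e dual to $f^*\alpha_i$ where the $\alpha_i$ generate $H^1(W_k;\ZSet)$; surjectivity of $\phi$ forces $f^*$ to be injective on $H^1$, so the $[X_i]$ are homologically independent. It remains to arrange that each $X_i$ is connected, which produces the required system and shows the geometric maximum is at least $\b$.

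\textbf{Main obstacle.} The hard part is precisely this last connectedness step. A transverse preimage $f^{-1}(p_i)$ need not be connected, and one must absorb the extra components without disturbing the homotopy class of $f$ (hence $\phi$) or the other submanifolds. I would do this by surgery: choose an embedded arc in $M$ joining two components of $X_i$ with interior disjoint from all the hypersurfaces, and modify $f$ near the arc to perform a one-surgery fusing the two components, repeating until $X_i$ is connected. Verifying that such arcs exist and that the surgeries can be carried out simultaneously for all $i$ while keeping the $X_j$, $j\ne i$, embedded and disjoint is the technical heart of the argument, and it is here that the full surjectivity of $\phi$, not merely the independence of the $[X_i]$, is used. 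Combining the two inequalities gives $\b=k$, the asserted maximum.
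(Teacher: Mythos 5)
The first thing to say is that the paper does not prove this statement at all: it is quoted, with attribution, as Jaco's Theorem~2.1 from~\cite{Jaco72}, and the paper's only (implicit) work is the cosmetic adaptation that for a closed orientable $M$ Jaco's compact two-sided hypersurfaces with $\partial X_i\subseteq\partial M$ can be taken closed, orientable (two-sided in an orientable manifold) and smooth (every class in $H_{n-1}(M)$ has a smooth representative). So there is no in-paper proof to match; what you have written is a blind reconstruction of Jaco's own argument, in the standard Stallings--Jaco style via maps to the wedge $W_k$. Your first direction (a disjoint homologically independent system gives a free quotient of rank $k$, hence $\b\ge k$) is essentially complete; the one imprecision is the phrase ``at least one appearing on exactly one side since $R\ne M$'' --- the correct move is to note that if $M\setminus\bigcup_i X_i$ is disconnected, then by connectivity of the dual graph some $X_i$ has its two sides in \emph{distinct} complementary components, and taking $R$ to be one of these yields the relation $\sum_j\varepsilon_j[X_j]=0$ with $\varepsilon_i=\pm1$, contradicting independence.

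The converse direction contains a genuine gap, which you flag but do not close, and your proposed fix is wrong as stated. You claim one can ``choose an embedded arc in $M$ joining two components of $X_i$ with interior disjoint from all the hypersurfaces, and modify $f$ near the arc'' to fuse them. First, such an arc need not exist between two given components (they must abut a common region of the complement), though this is repairable by working along the dual graph. Second, and decisively, not every such arc can be surgered: since the interior of the arc avoids every $p_j$, its $f$-image lies in $W_k\setminus\{p_1,\dots,p_k\}$, which is a contractible star, so the rel-endpoints class of $f(\mathrm{arc})$ in $\pi_1(W_k,p_i)$ is either trivial or $x_i^{\pm1}$, according to the sides (with respect to the coorientation pulled back by $f$) on which the arc leaves one component and arrives at the other. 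Only a \emph{binding tie} with null-homotopic image can be surgered --- asphericity of $W_k$ then lets you redefine $f$ on a ball around the arc rel its boundary, preserving $f_*=\phi$ and replacing the two sheets by a tube; for a tie of class $x_i^{\pm1}$ the coorientations along the tube do not match and no such redefinition exists. A concrete illustration of the failure: for $f\from T^2\to S^1$ given by $(x,y)\mapsto 2x$, the transverse preimage consists of two parallel, identically cooriented circles, and \emph{no} homotopy of $f$ makes it connected, since its Poincar\'e dual class $2a$ is non-primitive --- here $\phi$ is not surjective, which is exactly the hypothesis that must rescue the argument. Proving that surjectivity of $\phi$ always supplies enough admissible ties, and that the resulting reduction of the number of components terminates, is precisely the content of Jaco's proof; as it stands, your proposal postpones the theorem's hardest step rather than proving it.
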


\begin{foridiot}
The condition for $X_i$ to be smooth closed orientable is important for deduce that $\b\le h(M)$.
Jaco's~\cite[Theorem 2.1]{Jaco72} is formulated for a compact $M$, possibly with $\partial M$; $X_i$ are also compact and $\partial X_i\subseteq\partial M$. 
Since our $M$ is closed, then $X_i$ are also closed.
These $X_i$ are two-sided; since $M$ is orientable, so are $X_i$.
These $X_i$ are not necessary smooth, but any homology class can be realized by a smooth $X_i$.
\end{foridiot}

Accordingly, properties of $\b$ closely resemble those of $h(M;\ZSet)$.
Similarly to~\eqref{eq:1<h<b}--\eqref{eq:h=0}, it holds~\cite{Gelb18}:
\begin{gather}
\b=0\textrm{\quad iff\quad}b_1(M)=0
\label{eq:b'=0-iff-b1=0}
\shortintertext{and otherwise}
1\le \b\le b_1(M);
\label{eq:1<b'<b}
\end{gather}
in particular, $h(M;\ZSet)=0$ iff $\b=0$. 
Exactly as in Theorems~\ref{theor:conn_sum} and~\ref{theor:dir_prod}, for the connected sum, $\dim M_i\ge 2$, except for non-orientable surfaces, and the direct product it holds
\begin{alignat*}2%
b_1'(M_1\connsum M_2)&=b_1'(M_1)+b_1'(M_2),&\quad&\text{\cite{Harvey}}\\
b_1'(M_1\times M_2)&=\max\set{b_1'(M_1), b_1'(M_2)}.&&\text{\cite{Gelb18}}
\end{alignat*}

\begin{example}\label{ex:b'=h}
Non-surprisingly, for many manifolds $\b=h(M;R)$:
\begin{itemize}
\renewcommand\labelitemi{--}
\item
For the closed orientable surface, $b_1'(M^2_g)=g$~\cite{Leininger} and $h(M^2_g;R)=g$~\cite{Meln3}; see Example~\ref{ex:H(M^2_g)}.
\item
For $n$-torus, $b_1'(T^n)=1$~\cite{Gelb17} and $h(T^n;R)=1$~\cite{Meln3}; see Example~\ref{ex:h_R(T^n)}. 
\item
For manifolds with quasi-K\"ahler and 1-formal fundamental group, for example, for compact K\"ahler manifolds, $\b=h(M;\CSet)$~\cite{Dimca-Pa-Su}.
\item
For 
$M=\opConnsum_{i=1}^h\left(T^{m_i}\times S^{n-{m_i}}\right)$ from Theorem~\ref{theor:h,b}, it holds $\b=h(M;R)$.
\end{itemize}
\end{example}

A non-trivial theorem from~\cite{Gelb18} implies
that~\eqref{eq:b'=0-iff-b1=0}--\eqref{eq:1<b'<b} represent the only relation between $\b$ and $b_1(M)$ for any given
$\dim M$.
The last item in Example~\ref{ex:b'=h} shows that the construction from Theorem~\ref{theor:h,b} gives an elementary proof of this fact for large enough $\dim M$: 

\begin{theorem}
Let\/ $b',b\in\ZSet$. There exists a connected smooth closed orientable manifold\/ $M$ 
with $\b=b'$ and\/ $b_1(M)=b$ iff either
\begin{alignat*}2
b'=b=0,&\quad\text{see~\eqref{eq:b'=0-iff-b1=0}, or}\\
1\le b'\le b,&\quad\text{see~\eqref{eq:1<b'<b}.}
\end{alignat*}
\end{theorem}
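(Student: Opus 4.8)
The plan is to establish the two directions separately, leaning heavily on the realization construction already worked out in Theorem~\ref{theor:h,b} together with the behavioral laws for $\b$ under connected sum and direct product quoted just before the statement. First I would dispose of the necessity direction (``only if''): if such an $M$ exists, then \eqref{eq:b'=0-iff-b1=0} gives $\b=0\iff b_1(M)=0$, and \eqref{eq:1<b'<b} gives $1\le\b\le b_1(M)$ whenever $b_1(M)>0$. These two facts together say precisely that the pair $(b',b)=(\b,b_1(M))$ must satisfy either $b'=b=0$ or $1\le b'\le b$, which is exactly the stated dichotomy. So the necessity half is immediate from the quoted properties and requires no construction.

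For the sufficiency direction (``if'') I would treat the two cases. When $b'=b=0$ I take $M=S^n$ (any $n\ge 2$): this is simply connected, so $\pi_1=0$, whence $\b=0$, and $b_1(S^n)=0$. When $1\le b'\le b$, the key observation is that the manifold built in the proof of Theorem~\ref{theor:h,b} does double duty. Concretely I would reuse
$$
M=\opConnsum_{i=1}^{b'}\left(T^{m_i}\times S^{n-m_i}\right),\qquad \sum_{i=1}^{b'}m_i=b,
$$
with each $m_i\ge1$ and $n$ large enough that $n-m_i\ge2$ for all $i$. The last item of Example~\ref{ex:b'=h} already asserts $\b=h(M;R)$ for exactly this manifold, and Theorem~\ref{theor:h,b} computed $h(M;R)=b'$ and $b_1(M)=b$; combining these gives $\b=b'$ and $b_1(M)=b$, as required.

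Since Example~\ref{ex:b'=h} states the equality $\b=h(M;R)$ for this family without proof inside the statement environment, I would want to make the argument self-contained by verifying it from the additivity/maximum laws directly. Each factor $T^{m_i}\times S^{n-m_i}$ has free abelian fundamental group $\Z^{m_i}$ on the torus factor (the sphere factor is simply connected for $n-m_i\ge2$), so its co-rank is $1$ by the torus computation $b_1'(T^{m_i})=1$ cited in Example~\ref{ex:b'=h}, matching $h=1$. Then the connected-sum law $b_1'(M_1\connsum M_2)=b_1'(M_1)+b_1'(M_2)$ (valid here since $\dim M_i\ge2$) yields $\b=\sum_{i=1}^{b'}1=b'$, running in perfect parallel with the $h$-computation that used the additivity in Theorem~\ref{theor:conn_sum}.

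The main obstacle is not any deep analysis but rather ensuring the dimension hypotheses line up so that every quoted law applies: I must choose the $m_i$ (for instance all equal to $\lceil b/b'\rceil$ or $\lfloor b/b'\rfloor$ summing to $b$) and then pick $n\ge 2+\max_i m_i$ so that each sphere factor has dimension at least $2$, which is what both the K\"unneth-based product formula and the Mayer--Vietoris-based sum formula require, and which also keeps the fundamental group of each factor free abelian. Once the parameters are fixed consistently, every step is a direct citation; the only genuine care needed is the bookkeeping guaranteeing $n-m_i\ge2$ simultaneously with $\sum m_i=b$ and the count of summands equal to $b'$.
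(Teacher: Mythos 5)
Your proposal is correct and takes essentially the same route as the paper, which proves necessity from~\eqref{eq:b'=0-iff-b1=0}--\eqref{eq:1<b'<b} and sufficiency by reusing the manifold $M=\opConnsum_{i=1}^{b'}\left(T^{m_i}\times S^{n-m_i}\right)$ from Theorem~\ref{theor:h,b} together with the equality $\b=h(M;R)$ recorded in the last item of Example~\ref{ex:b'=h} (with $S^n$ handling $b'=b=0$). Your extra verification of $\b=b'$ via $\pi_1(T^{m_i}\times S^{n-m_i})\cong\Z^{m_i}$ and Harvey's additivity under connected sum merely makes explicit what the paper delegates to the cited example, and your dimension bookkeeping matches the paper's requirement $n-m_i\ge2$.
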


Comparing Theorems~\ref{theor:h(M)_geometry} and~\ref{theor:b'(M)_geometry} gives
\begin{align}
\b\le h(M;\ZSet);
\label{b'<h(M;Z)}
\end{align}
together with~\eqref{eq:1<h<b} this gives
a geometric proof of
lower and upper bounds on the isotropy index $h(M;\ZSet)$, which have been obtained indirectly in~\cite{Gelb10}. 
We extend this to fields of characteristic zero:

\begin{proposition}\label{prop:b'<h<b1}
Let $R=\ZSet$ or $R$ be a field, $\har R=0$. For 
the co-rank of the fundamental group $\b$,
the isotropy index $h(M;R)$, and 
the first Betti number $b_1(M)$ it holds
\begin{align}
\b\le h(M;R)\le b_1(M).
\label{h(M;R)<b1(M)}
\end{align}
\end{proposition}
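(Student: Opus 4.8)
The plan is to establish the two inequalities in \eqref{h(M;R)<b1(M)} separately, reducing the coefficient field $R$ to the already-understood case $R=\QSet$ via the results on extension of scalars proved earlier. First I would handle the upper bound $h(M;R)\le b_1(M)$. By definition of the isotropy index we always have $h(M;R)\le\rk H^1(M;R)$, and for a field with $\har R=0$ the universal coefficient theorem gives $\rk H^1(M;R)=b_1(M;R)=b_1(M)$; for $R=\ZSet$ this is Lemma~\ref{lem:h(M)=0}. So the upper bound is essentially immediate from what precedes.

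The substantive content is the lower bound $\b\le h(M;R)$. For $R=\ZSet$ this is exactly \eqref{b'<h(M;Z)}, which follows from comparing the geometric descriptions in Theorems~\ref{theor:h(M)_geometry} and~\ref{theor:b'(M)_geometry}: a system of $\b$ pairwise \emph{disjoint} homologically independent codimension-one submanifolds is in particular a system of homologically \emph{non}-intersecting such submanifolds (since $X_i\cap X_j=\emptyset$ trivially gives $[X_i\cap X_j]=0$), hence an isotropic system, so its cardinality $\b$ is bounded by the maximal cardinality $h(M;\ZSet)$. For a field $F$ with $\har F=0$, I would invoke Lemma~\ref{lem:h=h_Q} together with Proposition~\ref{prop:h_over-diff-rings}: since $\ZSet\subset\QSet\subseteq F$, extension of scalars does not decrease the isotropy index, so $h(M;\ZSet)=h(M;\QSet)\le h(M;F)=h(M;R)$. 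Chaining these gives $\b\le h(M;\ZSet)\le h(M;R)$.

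Putting the two halves together yields $\b\le h(M;R)\le b_1(M)$, as claimed. The only real obstacle is making sure the chain $\ZSet\hookrightarrow\QSet\hookrightarrow F$ is handled correctly so that Proposition~\ref{prop:h_over-diff-rings} applies; the passage from $\ZSet$ to $\QSet$ is the equality of Lemma~\ref{lem:h=h_Q} (rather than a strict inequality), and only the final step from $\QSet$ up to a larger characteristic-zero field $F$ uses the genuine monotonicity of \eqref{eq:h<h}. I would write the argument as the following short deduction.

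\begin{proof}
The upper bound $h(M;R)\le b_1(M)$ is \eqref{eq:1<h<b} of Lemma~\ref{lem:h(M)=0} together with the fact that $b_1(M;R)=b_1(M)$ when $\har R=0$, by the universal coefficient theorem.

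For the lower bound, first take $R=\ZSet$. By Theorem~\ref{theor:b'(M)_geometry}, there exist $\b$ pairwise non-intersecting homologically independent smooth closed orientable connected codimension-one submanifolds $X_i\subset M$, with $X_i\cap X_j=\emptyset$. In particular $[X_i\cap X_j]=0$, so $\set{X_i}$ is an isotropic system in the sense of Definition~\ref{def:isotropic-system}. By Theorem~\ref{theor:h(M)_geometry}, $h(M;\ZSet)$ is the maximum cardinality of an isotropic system, hence $\b\le h(M;\ZSet)$, which is \eqref{b'<h(M;Z)}.

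Now let $R=F$ be a field with $\har F=0$. Since $\ZSet\subset\QSet\subseteq F$, Lemma~\ref{lem:h=h_Q} gives $h(M;\ZSet)=h(M;\QSet)$, and applying Proposition~\ref{prop:h_over-diff-rings} to the extension $\QSet\subseteq F$ of the cup product on $H^1(M;\QSet)$ gives $h(M;\QSet)\le h(M;F)$. Combining,
\begin{align*}
\b\le h(M;\ZSet)=h(M;\QSet)\le h(M;F)=h(M;R).
\end{align*}
This proves \eqref{h(M;R)<b1(M)}.
\end{proof}
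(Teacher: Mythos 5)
Your proof is correct and takes essentially the same route as the paper, which likewise chains $\b\le h(M;\ZSet)\le h(M;F)\le b_1(M)$ using \eqref{b'<h(M;Z)}, Proposition~\ref{prop:h_over-diff-rings}, and \eqref{eq:1<h<b}. Your only deviation is one of granularity: you unfold $h(M;\ZSet)\le h(M;F)$ into $h(M;\ZSet)=h(M;\QSet)\le h(M;F)$ via Lemma~\ref{lem:h=h_Q}, which is precisely the justification the paper itself gives, in the remark following Proposition~\ref{prop:h_over-diff-rings}, for applying that proposition to $\ZSet\subset\QSet\subseteq F$.
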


\begin{proof}
By Proposition~\ref{prop:h_over-diff-rings}, for a field $F$ with $\har F=0$, we have $h(M;\ZSet)\le h(M;F)$. 
Equations~\eqref{b'<h(M;Z)} and~\eqref{eq:1<h<b} complete the proof:
\[ %
\b\le h(M;\ZSet)\le h(M;F)\le b_1(M).
\qedhere
\]
\end{proof}

Both bounds in~\eqref{h(M;R)<b1(M)} are exact (see Example~\ref{ex:b'=h} and Theorem~\ref{theor:h,b}); in particular, as we have shown, in many cases $\b$ is a very strong lower bound for $h(M)$.
However, both inequalities can also be strict:
\begin{example}\label{ex:Heisenberg}
Consider the Heisenberg nilmanifold $H^3$. Its fundamental group $\pi_1(H^3)$ is nilpotent,  so $b'_1(H^3)=1$.
Since  $H^1(H^3,\ZSet)=\ZSet^2$ with zero cup-product~\cite{Lambe-Pr}, we have 
$$
1=b'_1(H^3)<h(H^3;\ZSet)=b_1(H^3)=2.
$$
\end{example}

\begin{example}\label{ex:KT}
The Kodaira--Thurston nilmanifold $M=H^3\times S^1$ gives an example of 
$$
\b<h(M;\ZSet)<b_1(M).
$$
Indeed, the fundamental group $\pi_1(M)$ is nilpotent, so $\b=1$; by Theorem~\ref{theor:dir_prod} and given Example~\ref{ex:Heisenberg}, $h(M;\ZSet)=2$; and, obviously, $b_1(M)=3$.
\end{example}

\renewcommand\bibitem\bib

\end{document}